\theoremstyle{plain}
\newtheorem{theorem}{Theorem}[section]
\newtheorem{lemma}[theorem]{Lemma}
\newtheorem{proposition}[theorem]{Proposition}
\theoremstyle{definition}
\newtheorem{definition}[theorem]{Definition}
\theoremstyle{remark}
\newtheorem{remark}[theorem]{Remark}
\newcommand{\vietnamese}[1]{{\fontencoding{T5}\selectfont#1}}
\newcommand{\Z}{\mathbb{Z}}
\newcommand{\R}{\mathbb{R}}
\newcommand{\C}{\mathbb{C}}
\DeclareMathOperator{\ad}{ad}
\DeclareMathOperator{\disc}{disc}
\newcommand{\abs}[1]{\left\vert{#1}\right\vert}
\title{Double flip bifurcations in $\Z/2\Z$-symmetric Hamiltonian systems}
\author{Konstantinos Efstathiou}
\address{Konstantinos Efstathiou:
  Zu Chongzhi Center and Division of Natural and Applied Sciences \\
  Duke Kunshan University\\
  8 Duke Avenue,
  215316 Kunshan, Jiangsu, China}
\email{k.efstathiou@dukekunshan.edu.cn}
\author{Tobias V\r{a}ge Henriksen}
\address{Tobias V\r{a}ge Henriksen: 
  Bernoulli Institute for Mathematics,
  Computer Science and Artificial Intelligence\\
  University of Groningen\\ 
  P.O. Box 407, 9700 AK Groningen, The Netherlands,
  \textnormal{and}
  Department of Mathematics\\ 
  University of Antwerp\\ 
  Middelheimlaan 1, B-2020 Antwerp, Belgium
}
\email{t.v.henriksen@rug.nl}
\author{Sonja Hohloch}
\address{Sonja Hohloch: 
  Department of Mathematics\\ 
  University of Antwerp\\ 
  Middelheimlaan 1, B-2020 Antwerp, Belgium
}
\email{sonja.hohloch@uantwerpen.be}
\begin{document}

\date{\today}

\begin{abstract}
In this paper we introduce a new bifurcation in Hamiltonian systems, which we call the double flip bifurcation. The Hamiltonian depends on two parameters, one of which controls the double flip bifurcation. The result of the bifurcation is the occurrence of two Hamiltonian flip bifurcations with respect to the other parameter. The two Hamiltonian flip bifurcations are simultaneous with respect to the first parameter, and are connected by a curve-segment of singular points. We find a normal form for Hamiltonians describing systems going through double flip bifurcations, and compute said normal form for some examples.
\end{abstract}
\maketitle

\section{Introduction}

The theory of integrable (Hamiltonian) systems lies at the intersection of the theory of dynamical systems, symplectic geometry, ODEs, Lie theory, and classical mechanics. An integrable system is a triple $(M,\omega,F=(f_{1},\dots,f_{n}))$, where $(M,\omega)$ is a symplectic manifold of dimension $2n$, and $F : M \to \R^{n}$ is a smooth mapping, called the momentum map. The rank of $F$ is almost everywhere maximal, and the Poisson bracket, induced by $\omega$, vanishes for every pair $f_{i},f_{j}$, where $i,j \in \{1,\dots,n\}$. When $F$ has $n$ components we say that the integrable system has $n$ degrees of freedom, where each $f_{i}$ is called an integral of motion. 

Three important directions in the study of integrable systems (or, more generally, dynamical systems) are as follows:
\begin{enumerate}
    \item search for new phenomena, or
    \item search for local classifications and normal forms, or
    \item search for global classifications.
\end{enumerate}
The first direction naturally gives rise to the second, which often is necessary for the third direction, which again may lead to one of the first two directions. This work was motivated by the first direction, when we came across peculiar behaviour in integrable systems on Hirzebruch surfaces (see Section \ref{sec:ex_Hirzebruch}), which we decided to study locally. More precisely, we study a new bifurcation phenomena, which we describe by a local normal form, and which we compute for some examples. Concerning the third direction, a description of this bifurcation will play an essential role, as it produces so-called \textit{hyperbolic semitoric systems} (see below), for which attempts for a global classification are ongoing.

Now, let us put our results in context. Hyperbolic semitoric system are generalisiations of a particularly simple class of integrable systems, called toric. In toric systems, every integral of motion induces an $S^{1}$-action. This, in particular, restricts the possible components of singular points to be elliptic, or elliptic combined with regular components. Furthermore, the image of the momentum map is a polygon, which Delzant \cite{Delzant1988} showed to classify toric systems up to symplectomorphism.

Many systems in nature do not fall into the class of toric systems, but are examples of what we call semitoric systems. These are systems with $2$ degrees of freedom, for which one of the integrals of motion induces an $S^{1}$-action, while the other is arbitrary. Semitoric systems were classified by Pelayo and \vietnamese{{V{\~u} Ng{\d o}c}} \cite{Pelayo2009, Pelayo2011} by five invariants, one of them generalising the polygon invariant for toric systems. 

In addition to the regular and elliptic components of singularities in toric systems, the singularities of a semitoric system can also have focus-focus components. A focus-focus singularity may appear in a system as an elliptic-elliptic singularity goes through a certain bifurcation. For example, if the elliptic-elliptic singularity lives in a toric system, it may go through a Hamiltonian Hopf bifurcation (see van der Meer \cite{vanderMeer1985}), and it becomes focus-focus. Examples of systems going through Hamiltonian Hopf bifurcations include the Lagrange top \cite{Cushman1990}, the $3$D H{\'e}non-Heiles family \cite{Hanssmann2002}, the Hydrogen atom in crossed fields \cite{Efstathiou2004}, and Gaudin models \cite{Henriksen2025}. For more examples, see van der Meer's notes \cite[Section 5.4]{vanderMeer2017}. 

There is still a large restriction on which singularities can appear in semitoric systems. Firstly, they are all non-degenerate, but not even all non-degenerate types of singularities are included. Hyperbolic semitoric systems were introduced by Dullin and Pelayo \cite{DullinPelayo2016} to account for this. These are similar to semitoric systems, but do also allow singularities to have hyperbolic components. They also allow isolated degenerate singularities. Hyperbolic semitoric systems are not yet classified, but a first step was recently made by the first and third author together with Santos \cite{Efstathiou2024}, where they introduced an affine invariant akin to the polygon invariant for the toric and semitoric systems.

Singularities with hyperbolic components, as well as isolated degenerate singularities, can appear as a result of a subcritical Hamiltonian Hopf bifurcation, for which the degenerate singularities are cusps (also called parabolic). Note that this is a bifurcation which appears frequently in $2$ degree of freedom systems in which one of the integrals of motion, in suitable local coordinates $\{q_{1},p_{1},q_{2},p_{2}\}$, takes the form
\begin{equation} \label{eq:resonance}
\frac{m_{1}}{2}(q_{1}^{2}+p_{1}^{2}) + \frac{m_{2}}{2}(q_{2}^{2}+p_{2}^{2}),
\end{equation}
with $m_{1} = 1$ and $m_{2} = -1$. Such systems are said to be in $(1:-1)$-resonance. The fibre of a hyperbolic singularity arising this way is a stacked torus, see Figure \ref{fig:stacked-torus}. 

\begin{figure}
\def\scale{0.3}
\centering
\begin{subfigure}{\scale\linewidth}
    \centering
    \includegraphics[width=0.9\linewidth]{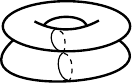}
    \caption{A stacked torus, also known as bi-torus or double torus.}
    \label{fig:stacked-torus}
\end{subfigure}
\begin{subfigure}{\scale\linewidth}
    \centering
    \includegraphics[width=0.9\linewidth]{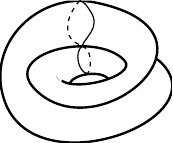}
    \caption{A curled torus.}
    \label{fig:curled-torus}
\end{subfigure}
\caption{Two different fibres of a hyperbolic singularity.}
\label{fig:tori}
\end{figure}

In the current paper, we present a novel type of bifurcation producing hyperbolic singularities and isolated degenerate singularities. This bifurcation appears in $(1:-2)$-resonant systems, i.e.\ with $m_{1} = 1$ and $m_{2} = -2$ in \eqref{eq:resonance}. Periodic orbits exposed to the $(1:-2)$-resonance has half period. For instance, this leads to the fibre of a hyperbolic singularity being a curled torus, see Figure \ref{fig:curled-torus}. Furthermore, in this resonance, Kalashnikov \cite{Kalashnikov1998} conjectures that there are two generic degenerate singularities. These singularities correspond to Hamiltonian flip bifurcations, which in $2$-degree of freedom systems are period-doubling bifurcations. This is explained in more detail below. See Cushman, Dullin, Han{\ss}mann and Schmidt \cite{Cushman2007} for more information on this resonance.


Now, to make the results of this paper more precise, consider the reduction of an integrable system to a $1$-degree of freedom system, where the integral of motion is the Hamiltonian $H_{j,t}$. The subscript $j,t$ indicates that the Hamiltonian depends on two parameters $j$ and $t$. We show that, if (the Taylor expansion of) $H_{j,t}$ satisfies certain properties, the system goes through a bifurcation not previously encountered in the literature, which we call the \emph{double flip bifurcation}. Below we give a description of the bifurcation, but let us first say a few words about the parameters $j$ and $t$. 

The double flip bifurcation happens as one of the parameters, say $t$, is varied. The other parameter can be thought of as arising from the reduction of an $n$-degree of freedom system, with $n \geq 2$. That is, if there is another integral of motion, say $J$, then let the parameter $j$ trace the energy-levels of $J$, namely let $J = j$. When we want to illustrate our results by figures, we will usually consider $n=2$, which can be easily visualised. That is, we may draw the bifurcation diagram, i.e.\ the image of the critical points under the momentum map, in $2$ dimensions, where $J$ and $H_{j,t}$ are the coordinate axes of the diagram. The word ``double'' in the name double flip bifurcation comes from the fact that the bifurcation, with respect to $t$, produces two bifurcations in the bifurcation diagram, with respect to the parameter $j$.

Let us now explain the bifurcations with respect to $j$. They are examples of the so-called \emph{Hamiltonian flip bifurcation}, see Han{\ss}mann \cite[pp.\ 33-34]{Hanssmann2006}. In $2$-degree of freedom systems, this is a period-doubling bifurcation. That is, a stable singularity in the sense of Lyapunov goes through a Hamiltonian flip bifurcation if it loses its stability, and simultaneously produces an additional stable singularity. If the singularity was instead unstable and gains stability while an additional unstable singularity is produced, then we talk of a \emph{dual Hamiltonian flip bifurcation}. The two Hamiltonian flip bifurcations resulting from the double flip bifurcation can be of any combination of the dual and non-dual variants. That is, they may both be non-dual, both dual, or one of each.

We now recall local models for the two variants of the Hamiltonian flip bifurcation. Note that they are similar to the pitchfork bifurcation up to a $\Z_{2} := \Z/2\Z$ isotropy group. A local model for the Hamiltonian flip bifurcation is given by Bolsinov and Fomenko \cite[Section 10.6, Example 2]{Bolsinov2004}: let $M = \R^{4}$ with coordinates $q_{1},p_{1},q_{2},p_{2}$ and canonical symplectic structure $\omega = dq_{1} \wedge dp_{1} + dq_{2} \wedge dp_{2}$. Let $F = (J,H_{1})$ be given by $J = p_{2}$ and $H_{1} = p_{1}^{2} + q_{1}^{4} - q_{1}^{2}p_{2}$. For some $j \in \R$, consider the energy-level $J = j$. Then we reduce $H_{1}$ by the action induced by $J$, obtaining
\begin{equation*}
H_{1,j}^{\textup{red}}(q_{1},p_{1}) := p_{1}^{2} + q_{1}^{4} - q_{1}^{2}j.
\end{equation*}
In Figure \ref{fig:type-H1} the change in stability of a singularity going through a Hamiltonian flip bifurcation is drawn along increasing $j$. It changes from a centre (stable) to a saddle (unstable), with two additional centres emerging. The two new centres are identified under the action by the $\Z_{2}$ isotropy group. The image of the momentum map is drawn in Figure \ref{fig:bif-H1}, where the black curves are the bifurcation diagram.

\begin{figure}[t]
\def\scale{0.75}
\begin{subfigure}{0.45\linewidth}
    \centering
    \includegraphics[width=0.9\linewidth]{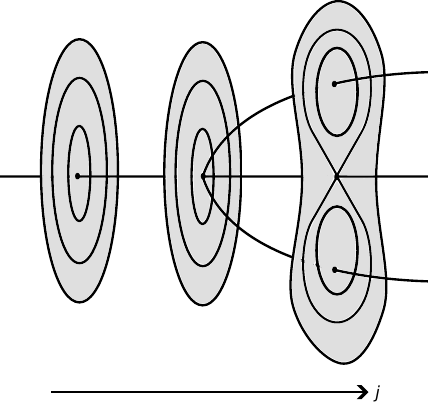}
    \caption{The types of singularities of the $H_{1,j}^{\textup{red}}$. Note that the upper and lower half are identified by the $\Z_{2}$-isotropy group.}
    \label{fig:type-H1}
\end{subfigure}
\begin{subfigure}{0.45\linewidth}
    \centering
    \includegraphics[width=0.9\linewidth]{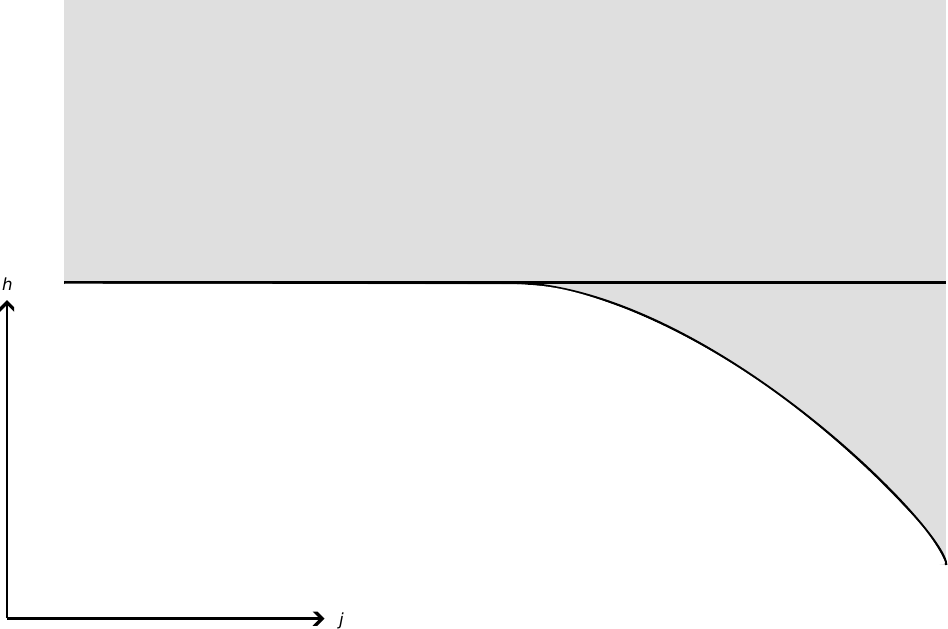}
    \caption{The black line and curve are the bifurcation diagram, while the grey area consist of regular values.}
    \label{fig:bif-H1}
\end{subfigure}
\caption{}
\label{fig:H1}
\end{figure}

The branching point in Figure \ref{fig:H1} is a degenerate singularity. In Kalashnikov's \cite{Kalashnikov1998} list of generic degenerate singularities in $2$-degree of freedom integrable systems, this is the $2_{a}$-singularity. 

The dual Hamiltonian flip bifurcation is also discussed by Bolsinov and Fomenko \cite[Section 10.6, Example 3]{Bolsinov2004}. This has a local model given by $H_{2} = p_{1}^{2} - q_{1}^{4} + q_{1}^{2}p_{2}$ and $J = p_{2}$, and so the reduced Hamiltonian is
\begin{equation*}
H_{2,j}^{\textup{red}}(q_{1},p_{1}) := p_{1}^{2} - q_{1}^{4} + q_{1}^{2}j.
\end{equation*}
The change in stability of the singularities of $H_{2,j}^{\textup{red}}$ is illustrated in Figure \ref{fig:type-H2}, and the image of the momentum map (and the bifurcation diagram) is drawn in Figure \ref{fig:bif-H2}. 

\begin{figure}[t]
\def\scale{0.75}
\begin{subfigure}{0.45\linewidth}
    \centering
    \includegraphics[width=0.9\linewidth]{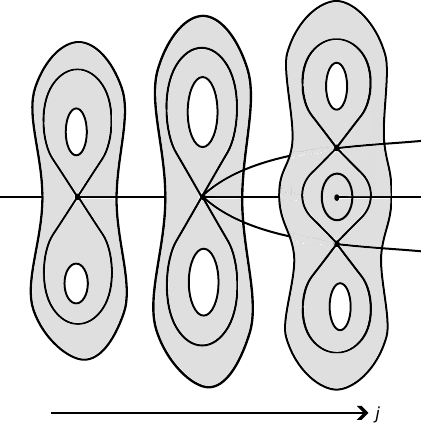}
    \caption{The types of singularities of the $H_{2,j}^{\textup{red}}$. Note that the upper and lower half are identified by the $\Z_{2}$-isotropy group.}
    \label{fig:type-H2}
\end{subfigure}
\begin{subfigure}{0.45\linewidth}
    \centering
    \includegraphics[width=0.9\linewidth]{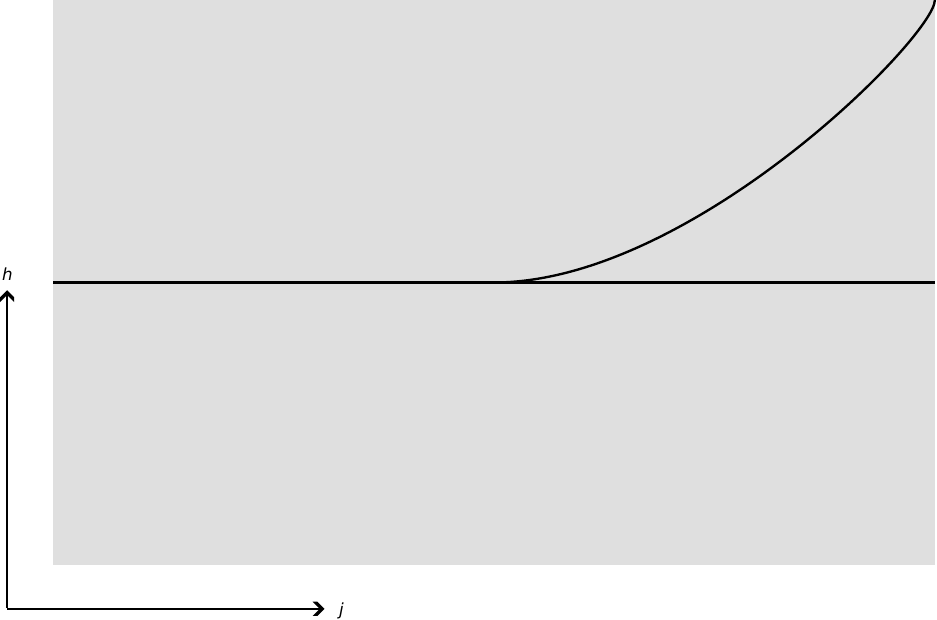}
    \caption{The black line and curve are the bifurcation diagram, while the grey area consist of regular values.}
    \label{fig:bif-H2}
\end{subfigure}
\caption{}
\label{fig:H2}
\end{figure}

Just as in the previous model, the branching point in Figure \ref{fig:H2} is a degenerate singularity. In Kalashnikov's list, this is the $2_{b}$-singularity. 

When an integrable system goes through a double flip bifurcation, the bifurcation diagram contains either two copies of the local model $H_{1}$, two copies of the local model $H_{2}$, or one copy of $H_{1}$ and one copy of $H_{2}$. We now give a normal form for Hamiltonian systems going through a double flip bifurcation. We also gives conditions for when the two Hamiltonian flip bifurcations are dual or not.

\begin{theorem} 
Assume that there are coordinate changes by smooth functions such that the $6$-jet of a function $f_{j,t}(q,p)$ depending on two parameters $j$ and $t$ can be put in the following form:
\begin{equation} \label{eq:H-intro}
H_{j,t}(q,p) = \frac{a}{2}p^{2} + \frac{b}{6}q^{6} + \frac{\nu_{1}(j,t)}{2}q^{2} + \frac{\nu_{2}(j,t)}{4}q^{4}
\end{equation}
where $a,b \in \R \setminus \{0\}$. Furthermore, we assume $\nu_{1}(j,t)$ satisfies
\begin{align} \label{list:saddle-node}
\begin{cases}
\nu_{1}(0,0) = 0, \\
\frac{\partial \nu_{1}}{\partial j}(0,0) = 0, \\
\frac{\partial^{2} \nu_{1}}{\partial j^{2}}(0,0) \neq 0, \\
\frac{\partial \nu_{1}}{\partial t}(0,0) \neq 0,
\end{cases}
\end{align}
and $\nu_{2}(0,0) = 0$. Then the system given by $f_{j,t}(q,p)$ goes through a double flip bifurcation as $t$ passes through $0$.

For $t = \tau$ sufficiently close to $0$, let $j_{0}^{\pm} = j_{0}^{\pm}(\tau)$ be such that $\nu_{1}(j_{0}^{\pm},\tau) = 0$ and $\nu_{2}(j_{0}^{\pm},\tau) \neq 0$. Then the Hamiltonian flip bifurcations occur for $(j,t) = (j_{0}^{\pm},\tau)$. If $a \nu_{2}(j_{0}^{\pm},\tau) < 0$, then the bifurcation is dual, and if $a \nu_{2}(j_{0}^{\pm},\tau) > 0$ it is not.
\end{theorem}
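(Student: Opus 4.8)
The plan is to reduce the whole analysis to the potential
$V_{j,t}(q) := \frac{b}{6}q^{6} + \frac{\nu_{2}(j,t)}{4}q^{4} + \frac{\nu_{1}(j,t)}{2}q^{2}$, since the singular points of $H_{j,t} = \frac{a}{2}p^{2} + V_{j,t}$ are exactly the points $(q_{*},0)$ with $V_{j,t}'(q_{*}) = 0$, and the linearised Hamiltonian vector field there has eigenvalues $\pm\sqrt{-a\,V_{j,t}''(q_{*})}$; hence $(q_{*},0)$ is a centre when $a\,V_{j,t}''(q_{*}) > 0$, a saddle when $a\,V_{j,t}''(q_{*}) < 0$, and degenerate when $a\,V_{j,t}''(q_{*}) = 0$. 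Writing $V_{j,t}'(q) = q\,g(q^{2})$ with $g(x) = bx^{2} + \nu_{2}x + \nu_{1}$, the origin $q = 0$ is a singular point for every $(j,t)$, with $V_{j,t}''(0) = \nu_{1}(j,t)$, while the other singular points form $\Z_{2}$-pairs $q = \pm\sqrt{x}$ for the positive roots $x$ of $g$, with $V_{j,t}''(\pm\sqrt{x}) = 2x\,g'(x)$. All of the $q = 0$ singular points are mapped to $V_{j,t}(0) = 0$, so they give a segment of the line $H = 0$ in the bifurcation diagram. Since the $q^{6}$-term with $b \neq 0$ makes the singularity at the origin finitely determined, it suffices to work with the $6$-jet $H_{j,t}$ rather than with $f_{j,t}$ itself.

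First I would analyse the zero set of $\nu_{1}$. Taylor expanding and using \eqref{list:saddle-node} — in particular $\nu_{1}(0,0) = 0$ and $\frac{\partial\nu_{1}}{\partial j}(0,0) = 0$, which kill the constant and the linear-in-$j$ terms — gives $\nu_{1}(j,t) = \alpha t + \frac{\beta}{2}j^{2} + (\text{higher order})$ with $\alpha := \frac{\partial \nu_{1}}{\partial t}(0,0) \neq 0$ and $\beta := \frac{\partial^{2}\nu_{1}}{\partial j^{2}}(0,0) \neq 0$. Since $\frac{\partial \nu_{1}}{\partial t}(0,0) \neq 0$, the implicit function theorem produces a smooth function $t = \phi(j)$ with $\phi(0) = \phi'(0) = 0$ and $\phi''(0) = -\beta/\alpha \neq 0$ whose graph is exactly $\{\nu_{1} = 0\}$ near the origin, i.e.\ a nondegenerate parabola-like curve. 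Replacing $t$ by $-t$ if necessary (which preserves all the hypotheses), we may assume that $\phi$ has a strict minimum at $j = 0$, so that for $t = \tau > 0$ small there are exactly two zeros $j_{0}^{-}(\tau) < 0 < j_{0}^{+}(\tau)$ of $\nu_{1}(\cdot,\tau)$, for $\tau = 0$ only the zero $j = 0$, and for $\tau < 0$ none; moreover $\frac{\partial \nu_{1}}{\partial j}(j_{0}^{\pm}(\tau),\tau) \neq 0$, so near each point $(j_{0}^{\pm}(\tau),\tau)$ the quantity $\mu := \nu_{1}(j,t)$ may be used as a local coordinate in place of $j$.

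Then I would carry out the local bifurcation analysis at $(j_{*},\tau) = (j_{0}^{\pm}(\tau),\tau)$, where $\nu_{1} = 0$ and $c := \nu_{2}(j_{*},\tau) \neq 0$. With $t = \tau$ fixed and $\mu$ small, solving $g(x) = bx^{2} + \nu_{2}x + \mu = 0$ gives (the discriminant tending to $c^{2} > 0$) a branch of roots $x(\mu) = -\mu/\nu_{2} + O(\mu^{2}) \to 0$, producing a $\Z_{2}$-pair of singular points $q = \pm\sqrt{x(\mu)}$ precisely when $\mu c < 0$, which merges into $q = 0$ as $\mu \to 0$; the other root of $g$ stays bounded away from $0$ and does not take part in the local picture. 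The signs $V_{j,t}''(0) = \mu$ and $V_{j,t}''(\pm\sqrt{x(\mu)}) = 2x(\mu)\,g'(x(\mu)) = -2\mu + O(\mu^{2})$ then show that, across $\mu = 0$, the origin changes type according to $\sgn(a\mu)$ while the emerging pair has the opposite type; comparing with the signs of $a$ and $c$ recovers the two local models recalled above — the $H_{1}$-picture (a centre becoming a saddle together with two new centres, the branching point being a $2_{a}$-singularity) when $ac > 0$, and the $H_{2}$-picture (a saddle becoming a centre together with two new saddles, a $2_{b}$-singularity) when $ac < 0$. Hence the bifurcation at $(j_{0}^{\pm}(\tau),\tau)$ is a Hamiltonian flip bifurcation, non-dual when $a\,\nu_{2}(j_{0}^{\pm}(\tau),\tau) > 0$ and dual when $a\,\nu_{2}(j_{0}^{\pm}(\tau),\tau) < 0$, as claimed. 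The $q^{6}$-term is only a higher-order perturbation here (the bifurcating branch has $q^{2} = O(\mu)$); this can be justified by the rescaling $q = \sqrt{|\mu|}\,\tilde{q}$, together with rescalings of $p$ and of the energy, under which the $q^{6}$-term picks up a factor $|\mu| \to 0$ while the $q^{2}$- and $q^{4}$-terms become the quartic flip model $\frac{a}{2}\tilde{p}^{2} + \frac{\sgn(\mu)}{2}\tilde{q}^{2} + \frac{\nu_{2}}{4}\tilde{q}^{4}$.

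Finally I would assemble the global statement. For $t = \tau$ on the active side, the curve $\{\nu_{1} = 0\}$ carries the two Hamiltonian flip bifurcations just described, at $j_{0}^{-}(\tau)$ and $j_{0}^{+}(\tau)$, occurring at the one value $t = \tau$; and the segment of singular values $\{(j,0) : j \in [j_{0}^{-}(\tau), j_{0}^{+}(\tau)]\}$ in the bifurcation diagram, coming from the origin $(q,p) = (0,0)$, joins them. As $\tau \to 0$ the zeros $j_{0}^{\pm}(\tau)$ coalesce at $j = 0$, where $\nu_{1} = \nu_{2} = 0$ and $V_{0,0}(q) = \frac{b}{6}q^{6}$, so the pair of flips is born from — and, on the other side of $t = 0$, absorbed into — a single, more degenerate singularity, while on the inactive side of $t = 0$ the origin keeps a fixed type and no flips occur. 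This is precisely the double flip bifurcation. I expect the main obstacle to be the local identification in the third step: making the ``higher-order perturbation'' argument fully rigorous — controlling the $q^{6}$-term and the non-polynomial dependence of $\nu_{1}$ and $\nu_{2}$ on $(j,t)$ uniformly near the flip points — and checking that the dual versus non-dual dichotomy is captured exactly by the sign of $a\,\nu_{2}$ via the sign bookkeeping above.
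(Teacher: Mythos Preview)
Your argument is correct and complete in outline, but it takes a genuinely different route from the paper's. The paper splits the proof into two lemmas. First, it proves the Hamiltonian flip statement (your third step) \emph{geometrically}: it passes to the $\Z_2$-invariants $u=\tfrac12 q^2$, $v=\tfrac12 p^2$, $w=qp$, so that the reduced phase space is the cone $\mathcal{B}=\{4uv=w^2,\ u,v\ge 0\}$, writes the level sets $H_{j,t}^{-1}(h)$ as graphs $v=v_h(u)$, and applies Han{\ss}mann's criterion (the cited Theorem~2.17): a flip occurs when the level set through the apex changes from entering to avoiding the cone, and the dual/non-dual dichotomy is read off from the sign of $\partial^2 v_0/\partial u^2(0)=-\tfrac{2}{a}\nu_2$. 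Second, it proves $6$-determinacy of $H_{0,0}$ from scratch via Gibson's criterion $\mathcal{M}^6\subset\mathcal{M}\mathcal{J}_{H}$, which is exactly the step you invoke by citing ``the $q^6$-term makes the singularity finitely determined.''

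What the two approaches buy: your direct potential/eigenvalue analysis is more elementary and self-contained for the flip identification --- it does not need the cone picture or the cited Han{\ss}mann theorem, and your sign bookkeeping for $aV''$ at $q=0$ and at $q=\pm\sqrt{x(\mu)}$ recovers the $a\nu_2\gtrless 0$ dichotomy cleanly. (Indeed, the paper carries out essentially your computation later, in its Section on the bifurcation diagram.) Conversely, the paper's cone argument makes the qualitative picture transparent and sidesteps your rescaling step entirely: once the level sets are drawn over the cone, transversality of the $q^4$-curvature is all that is needed, and the $q^6$-term is manifestly higher order in $u$. The one place where your write-up is thinner than the paper's is the determinacy step: you assert it, whereas the paper gives the short Jacobian-ideal computation. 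If you want a fully self-contained proof along your lines, either include that computation, or sharpen your rescaling argument so that it handles not just the $q^6$-term of $H_{j,t}$ but an arbitrary order-$\ge 7$ remainder in $f_{j,t}$ uniformly near the flip points --- this is exactly the obstacle you flagged, and it is the only real gap.
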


The theorem is restated as Theorem \ref{thm:normal-form}, and proven in Section \ref{sec:normal-form}.

Note that the Hamiltonian \eqref{eq:H-intro} with $\nu_{1}(j,t) = j$ and $\nu_{2}(j,t) = t$ is the universal unfolding of the $\Z_{2}$-symmetric $A_{5}^{\pm}$-singularity, see Han{\ss}mann \cite[Chapter 2]{Hanssmann2006}. The superscript $\pm$ is the sign of the product $ab$. To obtain Hamiltonian flip bifurcations one requires different conditions on $\nu_{1}(j,t)$ and $\nu_{2}(j,t)$. In fact, the list of conditions \eqref{list:saddle-node} are exactly the conditions for the differential equation $\frac{dj}{dt} = \nu_{1}(j,t)$ to go through a saddle-node bifurcation at the origin. This means that the system $\frac{dj}{dt} = \nu_{1}(j,t)$ is topologically equivalent to the system $\frac{dj}{dt} = j^{2} \pm t$. Thus, the equilibria for the system in normal form are given by $j^{2} \pm t = 0$. As $t$ passes through zero, $j$ changes from having zero to two solutions (or opposite), and these two solutions correspond to the two Hamiltonian flip bifurcations of the double flip bifurcation.

Let us now introduce another important element to understanding the double flip bifurcation:
\begin{equation*}
\disc_{b}(j,t) := \nu_{2}(j,t)^{2} - 4b\nu_{1}(j,t).
\end{equation*}
This is introduced in Section \ref{sec:bif-diag} where we discuss the critical points of the system. It turns out that also the differential equation $\frac{dj}{dt} = \disc_{b}(j,t)$ goes through a saddle-node bifurcation at the origin. However, one of the conditions is
\begin{equation} \label{eq:deg}
\frac{\partial^{2} \disc_{b}}{\partial j^{2}}(0,0) = 2 \left( \frac{\partial \nu_{2}}{\partial j}(0,0) \right)^{2} - 4b \frac{\partial^{2} \nu_{1}}{\partial j^{2}}(0,0).
\end{equation}
If \eqref{eq:deg} vanishes, then the saddle-node bifurcation is degenerate. We also say that the double flip bifurcation is degenerate in this case. If \eqref{eq:deg} does not vanish, then both bifurcations are non-degenerate. We now give a condition determining the direction of concavity of the two saddle-node bifurcations (for more details, see the paragraph above Theorem \ref{thm:concave}), which also yields the degeneracy of the double flip bifurcation. The direction of concavity is interesting as it yields information on how the bifurcation occurs (see Figures \ref{fig:bif-diag-normal-form-1}, \ref{fig:bif-diag-normal-form-2}, \ref{fig:bif-diag-normal-form-3}, \ref{fig:bif-diag-normal-form-4}, \ref{fig:osc-bif-diag}, \ref{fig:osc2-bif-diag}, and \ref{fig:W2-bif-diag}).

\begin{theorem}
The saddle-node bifurcations associated to $\nu_{1}(j,t)$ and $\disc_{b}(j,t)$ are concave the same direction if
\begin{equation*}
\frac{1}{2b} \frac{(\partial \nu_{2}/\partial j (0,0))^{2}}{\partial^{2} \nu_{1}/\partial j^{2} (0,0)} < 1.
\end{equation*}
If the inequality is reversed, they are concave in the opposite direction. If it is an equality, the double flip bifurcation is degenerate.
\end{theorem}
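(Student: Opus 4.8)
The plan is to reduce the statement to one second-derivative computation for each of the two equilibrium curves, carried out with the implicit function theorem. First I would record the general fact that underlies everything: if $dj/dt = g(j,t)$ undergoes a saddle-node bifurcation at the origin, so that $g(0,0) = 0$, $\partial g/\partial j(0,0) = 0$, $\partial^2 g/\partial j^2(0,0) \neq 0$ and $\partial g/\partial t(0,0) \neq 0$, then the equilibrium set $\{g = 0\}$ is, near the origin, the graph of a smooth function $t = \phi(j)$ with $\phi(0) = 0$. Differentiating $g(j,\phi(j)) \equiv 0$ once gives $\phi'(0) = -(\partial g/\partial j)(0,0)/(\partial g/\partial t)(0,0) = 0$, and differentiating a second time, with the terms containing $\phi'$ dropping out, gives
\begin{equation*}
\phi''(0) = -\frac{\partial^2 g/\partial j^2(0,0)}{\partial g/\partial t(0,0)}.
\end{equation*}
The direction of concavity of the saddle-node bifurcation is, by definition, the sign of $\phi''(0)$.

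Next I would apply this to $g = \nu_{1}$ and to $g = \disc_{b}$. For $\nu_{1}$ the hypotheses \eqref{list:saddle-node} give directly that the equilibrium curve has $\phi_{1}''(0) = -(\partial^2\nu_{1}/\partial j^2)(0,0)/(\partial\nu_{1}/\partial t)(0,0)$. For $\disc_{b} = \nu_{2}^{2} - 4b\nu_{1}$ I would first re-verify the saddle-node conditions at the origin: using $\nu_{2}(0,0) = 0$ together with $\nu_{1}(0,0) = (\partial\nu_{1}/\partial j)(0,0) = 0$ one finds $\disc_{b}(0,0) = 0$, $(\partial\disc_{b}/\partial j)(0,0) = 2\nu_{2}(0,0)(\partial\nu_{2}/\partial j)(0,0) - 4b(\partial\nu_{1}/\partial j)(0,0) = 0$, and $(\partial\disc_{b}/\partial t)(0,0) = -4b(\partial\nu_{1}/\partial t)(0,0) \neq 0$, while $(\partial^2\disc_{b}/\partial j^2)(0,0)$ is the quantity \eqref{eq:deg}; assuming \eqref{eq:deg} is nonzero, the general fact applies to $\disc_{b}$ and yields
\begin{equation*}
\phi_{d}''(0) = -\frac{\partial^2\disc_{b}/\partial j^2(0,0)}{\partial\disc_{b}/\partial t(0,0)} = \frac{2\big((\partial\nu_{2}/\partial j)(0,0)\big)^{2} - 4b(\partial^2\nu_{1}/\partial j^2)(0,0)}{4b(\partial\nu_{1}/\partial t)(0,0)}.
\end{equation*}

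The final step is the sign comparison: the two saddle-node bifurcations are concave in the same direction exactly when $\phi_{1}''(0)$ and $\phi_{d}''(0)$ have the same sign, i.e.\ when $\phi_{1}''(0)\phi_{d}''(0) > 0$. Forming this product, the positive factor $\big((\partial\nu_{1}/\partial t)(0,0)\big)^{2}$ cancels, and dividing through by $\big((\partial^2\nu_{1}/\partial j^2)(0,0)\big)^{2} > 0$ turns $\phi_{1}''(0)\phi_{d}''(0) > 0$ into exactly $\frac{1}{2b}(\partial\nu_{2}/\partial j(0,0))^{2}/(\partial^2\nu_{1}/\partial j^2(0,0)) < 1$; a reversed inequality flips the sign of the product and hence the relative concavity, and equality is precisely $(\partial^2\disc_{b}/\partial j^2)(0,0) = 0$, the degeneracy condition \eqref{eq:deg}. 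I do not expect a real obstacle in this argument; the two points deserving care are verifying that $\disc_{b}$ genuinely satisfies the saddle-node conditions at the origin, so that the graph $t = \phi_{d}(j)$ and the invariant $\phi_{d}''(0)$ are well defined, and keeping track of signs in the last algebraic reduction, where the sign of $b$ must be followed through because $4b$ multiplies one of the terms.
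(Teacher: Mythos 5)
Your proposal is correct and follows essentially the same route as the paper: both arguments reduce the concavity of each equilibrium curve to the sign of $-\,\partial^{2}g/\partial j^{2}(0,0)\big/\,\partial g/\partial t(0,0)$ for $g=\nu_{1}$ and $g=\disc_{b}$ (the paper via the truncated saddle-node form $A j^{2}+Bt$, you via the implicit function theorem, which is a slightly more careful justification of the same invariant), and then perform the identical sign comparison yielding $\tfrac{1}{2b}(\partial\nu_{2}/\partial j(0,0))^{2}/(\partial^{2}\nu_{1}/\partial j^{2}(0,0))\lessgtr 1$, with equality matching the degeneracy condition \eqref{eq:deg}. No gaps.
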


This theorem is restated as Theorem \ref{thm:concave}, and proven in Section \ref{sec:bif-diag}.

\subsection*{Overview}

In Section \ref{sec:HFlip}, we recall some facts about the Hamiltonian flip bifurcation. In particular we show that the Hamiltonian \eqref{eq:H-intro} satisfying \eqref{list:saddle-node} includes two (dual) Hamiltonian flip bifurcations for $t$ close enough to zero. In Section \ref{sec:normal-form}, we show that \eqref{eq:H-intro} in fact is a normal form for the double flip bifurcation. In Section \ref{sec:bif-diag}, we study the bifurcation diagram in more detail, and develop a way to tell whether the double flip bifurcation is degenerate or not. Finally, in Section \ref{sec:examples}, we study three examples.

\subsection*{Acknowledgments}

The second author was fully, and the third author partially supported by the FWO-EoS project \textit{Beyond symplectic geometry} with UA Antigoon number 45816. Moreover, the third author was also partially supported by the grant \textit{Francqui Research Professor 2023-2026} of the Francqui Foundation with UA Antigoon number 49741.
\section{The Hamiltonian flip bifurcation} \label{sec:HFlip}

In this section we define and describe (dual) Hamiltonian flip bifurcations in more detail. In particular, we show that the Hamiltonian \eqref{eq:H-intro} with $\nu_{1}$ satisfying the conditions \eqref{list:saddle-node} contains two Hamiltonian flip bifurcations, which may or may not be dual, as $t$ passes through some value $t_{0}$. We follow a similar procedure to that in \cite[pp. 33-34]{Hanssmann2006}.

Let us introduce a Hilbert basis for the $\Z_{2}$ action $(q,p) \mapsto (-q,-p)$: $u = \frac{1}{2}q^{2}$, $v = \frac{1}{2}p^{2}$, and $w = pq$. Note that these generators are related by the syzygy $4uv = w^{2}$. In terms of the coordinates $u$, $v$, and $w$, the Hamiltonian from Equation \eqref{eq:H-intro}
\begin{equation*}
H_{j,t}(q,p) = \frac{a}{2}p^{2} + \frac{b}{6}q^{6} + \frac{\nu_{1}(j,t)}{2}q^{2} + \frac{\nu_{2}(j,t)}{4}q^{4},
\end{equation*}
takes the following form:
\begin{equation*}
H_{j,t}(u,v,w) = av + \frac{4}{3}bu^{3} + \nu_{1}(j,t)u + \nu_{2}(j,t)u^{2}.
\end{equation*}
Furthermore, the Hilbert basis gives rise to a cone
\begin{equation*}
\mathcal{B} := \{ (u,v,w) \in \R^{3} : 4uv = w^{2}, \, u \geq 0, \, v \geq 0 \}.
\end{equation*}
Note that the level set $H_{j,t}^{-1}(0)$ contains the apex of the cone, which is a (conical) singular point, and hence a singular point of the Hamiltonian system. This point plays an important role for us in what follows, see in particular Theorem \ref{thm:HFlip} below.

Apart from the singularity at the apex of $\mathcal{B}$, all other singularities are characterised by the level sets $H_{j,t}^{-1}(h)$, for some $h \in \R$, intersecting the cone $\mathcal{B}$ non-transversally. Note that $\mathcal{B}$ is a surface of revolution. Furthermore, all level sets $H_{j,t}^{-1}(h)$ are independent of $w$. Hence, the tangent spaces to $H_{j,t}^{-1}(h)$ are planes parallel to the $w$-axis. This means all singularities require $w = 0$. Thus, we need only consider the cone $\mathcal{B}$ and level sets $H_{j,t}^{-1}(h)$ projected onto the plane $\{w=0\}$. We draw the cone together with certain level sets in Figure \ref{fig:HFlip}.

\begin{figure}[b]
\def\scale{0.35}
\centering
\begin{tabular}{ccc} 
 \subfloat{\includegraphics[scale=\scale]{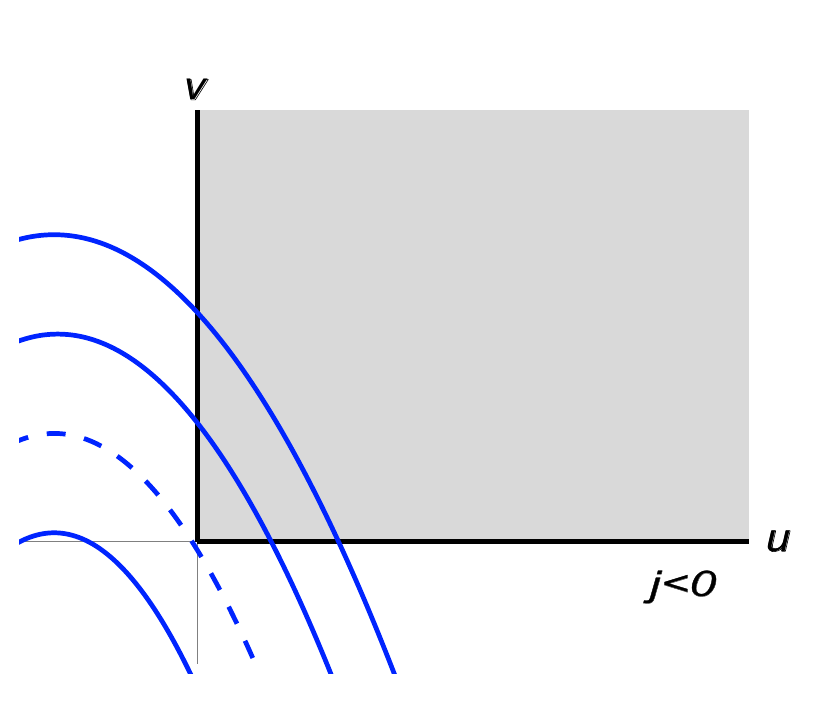}} & 
 \subfloat{\includegraphics[scale=\scale]{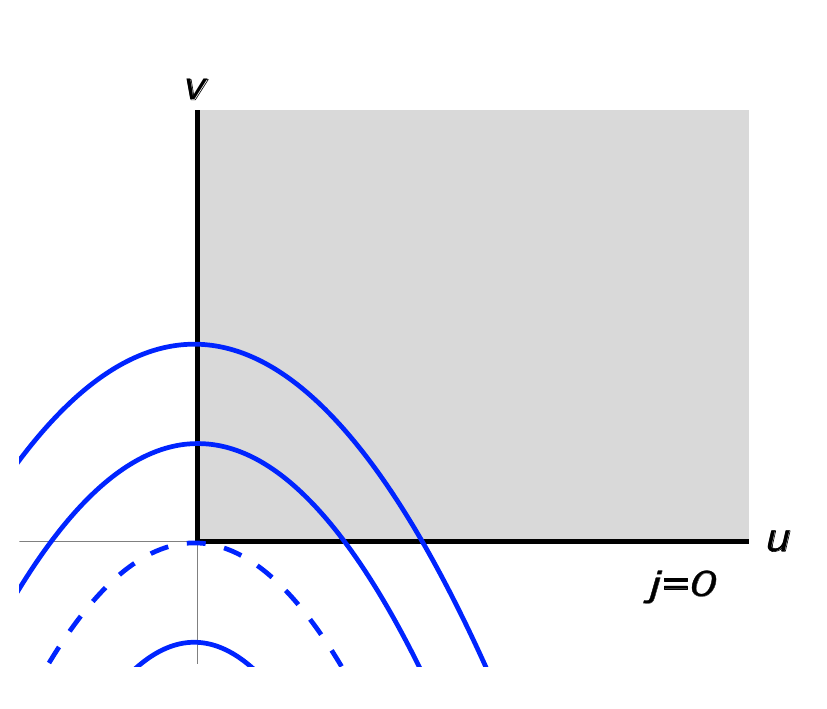}} & 
 \subfloat{\includegraphics[scale=\scale]{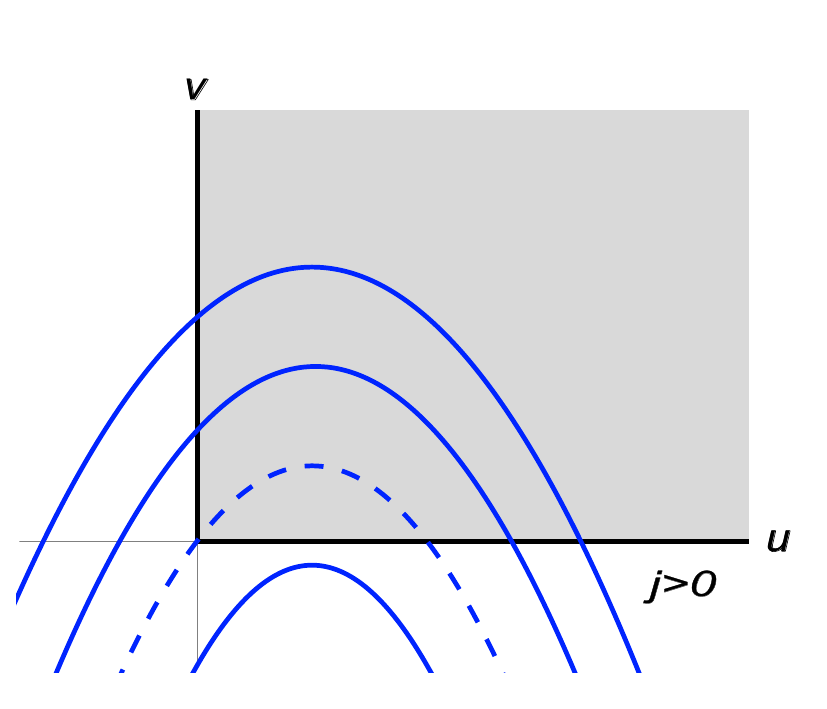}} \\
 \subfloat{\includegraphics[scale=\scale]{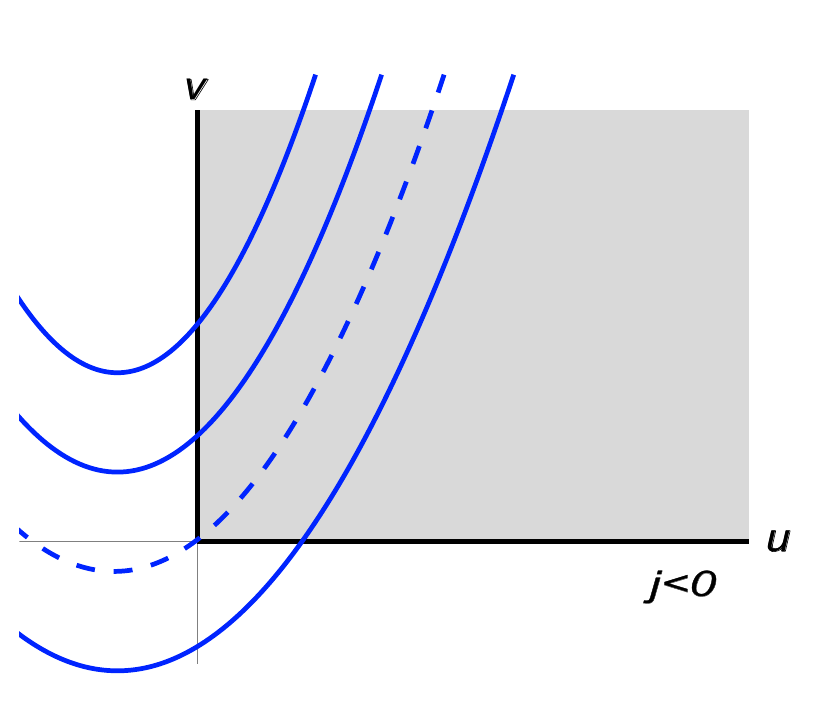}} & 
 \subfloat{\includegraphics[scale=\scale]{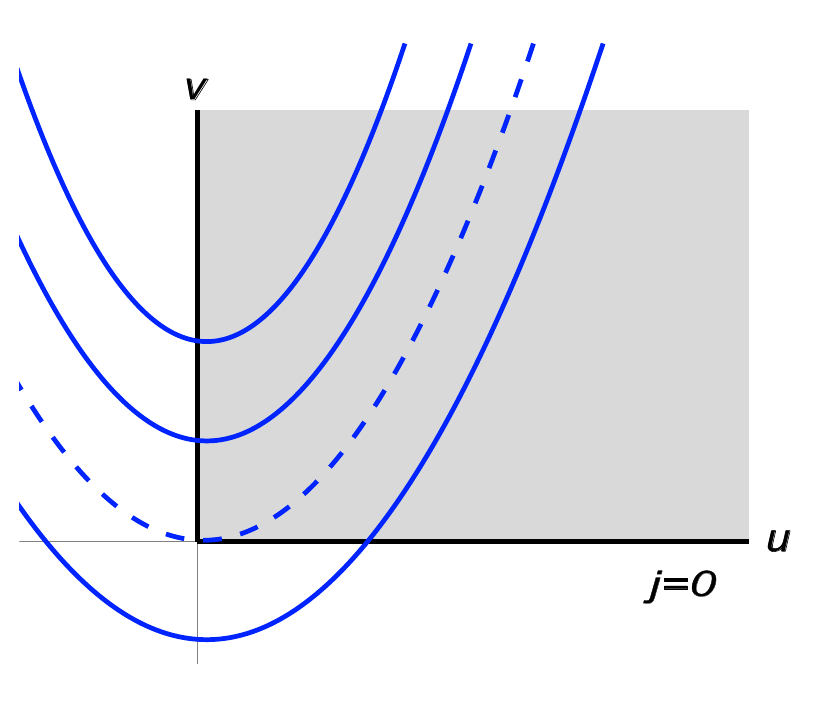}} & 
 \subfloat{\includegraphics[scale=\scale]{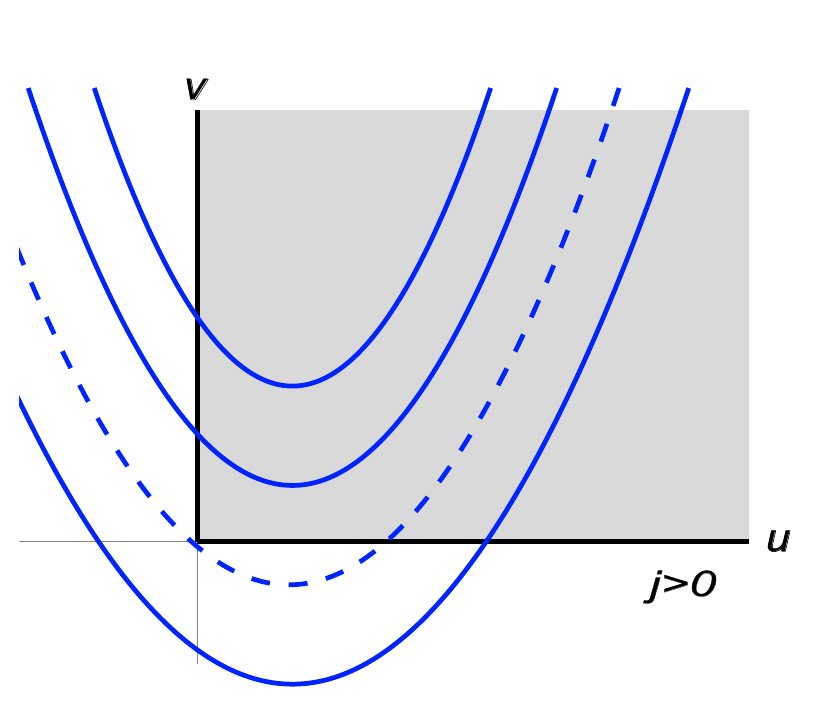}}
\end{tabular}
\caption{The grey area is the cone $\mathcal{B}$ projected to the plane $\{w=0\}$, and the blue curves are level sets $H_{j,t}^{-1}(h)$ for various levels $h$. The level set $H_{j,t}^{-1}(0)$ is drawn as a dashed curve. The top row shows a Hamiltonian flip bifurcation, as in the middle-figure $H_{j,t}^{-1}(0)$ only intersects $\mathcal{B}$ at its apex. The bottom row shows a dual Hamiltonian flip bifurcation, as in the middle-figure $H_{j,t}^{-1}(0)$ has further intersections with $\mathcal{B}$.}
\label{fig:HFlip}
\end{figure}

A singular point $x$ is called \emph{stable} (in the sense of Lyapunov) if all points in a sufficiently small neighbourhood of $x$ remain close to $x$ under the flow of the system for all time. If a singular point is not stable, then it is called \emph{unstable}. If all eigenvalues of the linearised system are non-zero and on the imaginary axis, then $x$ is a \emph{centre}, and $x$ is stable. If the linearised system at $x$ has no purely imaginary eigenvalues (where $0$ is counted as purely imaginary), then $x$ is a \emph{saddle}, and $x$ is unstable.

\begin{definition} \label{def:HFlip}
Consider a Hamiltonian system depending on a bifurcational parameter $j$, and let $0 \in \mathcal{B}$ be a singularity of the system. 
\begin{itemize}
    \item Assume $0$ is stable. If $0$ becomes unstable as $j$ goes through a value $j_{0}$, and at the same time produces an additional singularity, which is a centre, then $0$ goes through a \emph{Hamiltonian flip bifurcation} at $j = j_{0}$.
    \item Assume $0$ is unstable. If $0$ becomes stable as $j$ goes through a value $j_{0}$, and at the same time produces an additional singularity, which is a saddle, then $0$ goes through a \emph{dual Hamiltonian flip bifurcation} at $j = j_{0}$.
\end{itemize}
\end{definition}

The following theorem gives a method to determine if a (dual) Hamiltonian flip bifurcation takes place in a Hamiltonian system. 

\begin{theorem}{\cite[Theorem 2.17]{Hanssmann2006}} \label{thm:HFlip}
Let the $2$-dimensional phase space have a conical singular point, and let a neighbourhood of the singular point be called a local cone. Consider a $1$-parameter family of Hamiltonian systems given by the Hamiltonian $H_{j}$, and assume that the level sets $H_{j}^{-1}(h)$ intersects the local cone transversally. The level set of the singular point, $H_{j}^{-1}(0)$, intersects the local cone for $j < 0$, and for each $j > 0$ there is a neighbourhood of the origin in which the local cone has no further intersection with $H_{j}^{-1}(0)$. Then 
\begin{itemize}
    \item a Hamiltonian flip bifurcation occurs if $H_{0}^{-1}(0)$ touches the local cone from the outside (see the top row of Figure \ref{fig:HFlip}), and 
    \item a dual Hamiltonian flip bifurcation occurs if there are additional intersection points of $H_{0}^{-1}(0)$ with the local cone (see the bottom row of Figure \ref{fig:HFlip}).
\end{itemize}
\end{theorem}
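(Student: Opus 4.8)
The plan is to reduce the statement to the explicit local model of a conical singularity — the $\Z_{2}$-orbit space $\mathcal{B} = \{(u,v,w) : 4uv = w^{2},\ u,v \geq 0\}$ with apex at the origin — and to follow the critical points of $H_{j}$ together with their Lyapunov type as $j$ crosses $0$. The apex is always a critical point of the system, since the Hamiltonian vector field necessarily vanishes at a singular point of the reduced phase space; every other critical point is a point where a level set $H_{j}^{-1}(h)$ is tangent to $\mathcal{B}$. The first step is to read off the type of the apex from the sign behaviour of $H_{j}$ on a punctured neighbourhood of the apex in $\mathcal{B}$: if $H_{j}$ has a constant sign there, the apex is a local extremum of the energy, hence Lyapunov stable, and the linearisation shows it is a centre; if $H_{j}$ takes both signs, the apex is a saddle and unstable. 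By hypothesis $H_{j}^{-1}(0)$ meets the local cone beyond the apex precisely for $j < 0$, and the transversality assumption excludes a higher-order contact that would keep the sign constant; hence the apex is a saddle for $j < 0$ and a centre for $j > 0$, and $j = 0$ is the bifurcation value.

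The second step is to locate the additional critical points. Since $\mathcal{B}$ is a surface of revolution about the axis $\{u = v,\ w = 0\}$, one may use the associated normal form — or, as in the present paper, the fact that the level sets of $H_{j}$ are independent of the coordinate $w$ — to reduce the tangency condition to a one-variable equation along the meridian $\{w = 0\}$; this identifies the non-apex critical points with the points where the trace of a level set in the plane $\{w=0\}$ touches the boundary of the quadrant $\{u,v \geq 0\}$. Applying the implicit function theorem to this one-variable equation near $j = 0$ shows that exactly one such tangency point bifurcates out of the apex as $j$ passes through $0$: it is present on one side of $j = 0$ and absent on the other, which is the ``additional singularity'' appearing in Definition \ref{def:HFlip}. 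Pulling back to $\R^{2}$, this single point of $\mathcal{B}$ corresponds to the pair of points identified by the $\Z_{2}$-isotropy group.

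The third and decisive step is to decide whether the newly born critical point is a centre or a saddle, and this is where the contact of $H_{0}^{-1}(0)$ with $\mathcal{B}$ at the apex enters. If $H_{0}^{-1}(0)$ touches $\mathcal{B}$ from the outside (top row of Figure \ref{fig:HFlip}), then on the side of $j = 0$ on which the new point appears the apex is a saddle, and the level sets near the new critical value close up into small ovals around the new point, so it is a centre; combined with the first step this says the apex loses stability while a centre is produced, i.e.\ a Hamiltonian flip bifurcation. If instead $H_{0}^{-1}(0)$ has further intersections with $\mathcal{B}$ (bottom row of Figure \ref{fig:HFlip}), then on the side on which the new point appears the apex is a centre, and the level set through the new point is a crossing rather than an oval, so the new point is a saddle; combined with the first step this says the apex gains stability while a saddle is produced, i.e.\ a dual Hamiltonian flip bifurcation. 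Comparing with Definition \ref{def:HFlip} finishes the argument. The main obstacle is making the second and third steps uniform in $j$, i.e.\ showing that the qualitative picture of Figure \ref{fig:HFlip} is stable under the perturbation from $j = 0$ to nearby $j$; this is exactly what the transversality hypothesis (level sets meeting the local cone transversally away from the critical points) provides, together with the implicit function theorem applied to the tangency equation.
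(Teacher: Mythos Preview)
The paper does not prove this theorem: it is quoted verbatim from \cite[Theorem~2.17]{Hanssmann2006} and then invoked as a black box in the proof of Lemma~\ref{lem:H-flip}. There is no argument in the paper to compare your proposal against.

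For what it is worth, your outline is a plausible sketch of how such a proof would go, and the reduction to the plane $\{w=0\}$ is exactly the device the paper uses in Section~\ref{sec:HFlip} when \emph{applying} the theorem. The step that remains genuinely handwavy is the third one: you assert that the type of the new critical point (centre versus saddle) can be read off from whether $H_{0}^{-1}(0)$ touches the cone from outside or has additional intersections, but the justification (``the level sets near the new critical value close up into small ovals'' versus ``the level set through the new point is a crossing'') is a description of the conclusion rather than an argument for it. A rigorous version would compare the curvature of the level set to that of the cone boundary at the tangency, or compute the Hessian of $H_{j}$ restricted to $\mathcal{B}$ at the new point; this is essentially what the proof of Lemma~\ref{lem:H-flip} does in the concrete setting of the paper, via the sign of $\partial^{2} v_{0}/\partial u^{2}$.
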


In our system $H_{j,t}$, the parameter governing the Hamiltonian flip bifurcations is the parameter $j$. When we consider Hamiltonian flip bifurcations in this system, we consider the system with some fixed $t$.

\begin{lemma} \label{lem:H-flip}
Let $H_{j,t}$ be as in \eqref{eq:H-intro} with $\nu_{1}(j,t)$ satisfying \eqref{list:saddle-node}. Fix $\tau \in \R$ sufficiently close to $0$, and let $j_{0}^{\pm} = j_{0}^{\pm}(\tau)$ be real, and such that $\nu_{1}(j_{0}^{\pm},\tau) = 0$. Furthermore, assume that $\nu_{2}(j_{0}^{\pm},\tau) \neq 0$. Then Hamiltonian flip bifurcations occur for $(j,t) = (j_{0}^{\pm},\tau)$. 

Recall the coefficient $a$ from $H_{j,t}$. If $a \nu_{2}(j_{0}^{\pm},\tau) < 0$, then the bifurcation is dual, and if $a \nu_{2}(j_{0}^{\pm},\tau) > 0$ it is not.
\end{lemma}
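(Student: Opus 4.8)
The plan is to apply Theorem \ref{thm:HFlip} to the one-parameter family $H_j := H_{j,\tau}$ obtained by freezing $t = \tau$, taking the conical singular point to be the apex of $\mathcal{B}$ and the bifurcation value of the parameter to be $j = j_0^{\pm}$. Throughout I abbreviate $\nu_1 = \nu_1(j,\tau)$, $\nu_2 = \nu_2(j,\tau)$ and work, as the paper already does, in the plane $\{w = 0\}$, onto which $\mathcal{B}$ projects to the quadrant $Q = \{u \ge 0,\ v \ge 0\}$ with the apex at the origin; a point of $Q$ lifts to $\mathcal{B}$ via $w = \pm 2\sqrt{uv}$, and is distinct from the apex whenever $u > 0$. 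In these coordinates the level set $H_j^{-1}(0)$ projects to the curve
\begin{equation*}
C_j:\qquad v \;=\; -\frac{u}{a}\left(\tfrac{4}{3}b\,u^{2} + \nu_2\,u + \nu_1\right)
\;=\; -\frac{\nu_1}{a}\,u \;-\; \frac{\nu_2}{a}\,u^{2} \;-\; \frac{4b}{3a}\,u^{3},
\end{equation*}
so $H_j^{-1}(0)$ meets $\mathcal{B}$ near the apex exactly when $C_j$ meets $Q$ near the origin in a point with $u > 0$.

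First I would check the standing hypotheses of Theorem \ref{thm:HFlip}. Transversality of the level sets $H_j^{-1}(h)$ with a sufficiently small local cone, for $h \neq 0$, follows from a direct computation of the critical points of $H_j$ in the $(q,p)$-coordinates: these satisfy $p = 0$ and $q\,(b q^{4} + \nu_2 q^{2} + \nu_1) = 0$, so apart from the apex the only critical points near the origin form a $\Z_2$-orbit $q = \pm\sqrt{-\nu_1/\nu_2}\,(1+o(1))$, $p = 0$, present precisely when $\nu_1 \nu_2 < 0$, whose critical value $-\nu_1^{2}/(4\nu_2) + O(\nu_1^{3})$ tends to $0$ as $j \to j_0^{\pm}$; hence for any fixed $h \neq 0$ and $j$ close enough to $j_0^{\pm}$ the intersection near the apex is transversal. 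Next, since $\tau$ is close to $0$ the zeros $j_0^{\pm}(\tau)$ of $\nu_1(\cdot,\tau)$ are close to $0$, and they are nonzero, since $\tau = 0$ would force $j_0^{\pm} = 0$ and hence $\nu_2(j_0^{\pm},\tau) = \nu_2(0,0) = 0$, contradicting the hypothesis; expanding $\nu_1$ to second order in $j$ and first order in $t$ about the origin and using \eqref{list:saddle-node} then gives $\partial \nu_1/\partial j\,(j_0^{\pm},\tau) \neq 0$, so $\nu_1(j,\tau)$ changes sign transversally as $j$ crosses $j_0^{\pm}$. Since the leading term of $C_j$ for $\nu_1 \neq 0$ is $v \approx -\frac{\nu_1}{a}u$, for $u > 0$ small $C_j$ lies in $Q$ precisely when $a\nu_1 < 0$; hence $H_j^{-1}(0)$ meets $\mathcal{B}$ (away from the apex) for $j$ on exactly one side of $j_0^{\pm}$, and — after shrinking the local cone, which Theorem \ref{thm:HFlip} allows to depend on $j$ — only at the apex on the other side. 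Relabelling the parameter so the former is the side ``$j < 0$'', all hypotheses of Theorem \ref{thm:HFlip} hold.

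It then remains to decide, at $j = j_0^{\pm}$, whether $H_{j_0^{\pm}}^{-1}(0)$ touches the local cone only at the apex or has further intersections with it. Here $\nu_1 = 0$ and $\nu_2 = \nu_2(j_0^{\pm},\tau) \neq 0$, so $C_{j_0^{\pm}}$ has leading term $v \approx -\frac{\nu_2}{a}u^{2}$: for $u > 0$ small this lies in the interior of $Q$ when $a\nu_2 < 0$, and strictly below the $u$-axis — outside $Q$, meeting its boundary tangentially at the origin — when $a\nu_2 > 0$. By Theorem \ref{thm:HFlip} the first case yields a dual Hamiltonian flip bifurcation at $(j,t) = (j_0^{\pm},\tau)$ and the second a (non-dual) Hamiltonian flip bifurcation, which is exactly the asserted dichotomy in terms of the sign of $a\nu_2(j_0^{\pm},\tau)$; running this for each of $j_0^{+}$ and $j_0^{-}$ gives the two bifurcations.

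The step I expect to be the main obstacle is the verification around the one-sided intersection condition: one must extract from \eqref{list:saddle-node} and the smallness of $\tau$ that the sign change of $\nu_1$ at $j_0^{\pm}$ is transversal, and one must keep track of the fact that the local cone on which $H_j^{-1}(0)$ has no extra intersection is permitted to shrink as $j \to j_0^{\pm}$, since the auxiliary small critical point $q \approx \sqrt{-\nu_1/\nu_2}$ collapses onto the apex in that limit. The sign computations in the last paragraph and the transversality check are then routine.
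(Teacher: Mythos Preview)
Your proposal is correct and follows essentially the same route as the paper's proof: both pass to the projection onto $\{w=0\}$, write the zero level set as the graph $v = -\tfrac{\nu_1}{a}u - \tfrac{\nu_2}{a}u^{2} - \tfrac{4b}{3a}u^{3}$, use the saddle-node conditions on $\nu_1$ to see that the linear coefficient changes sign as $j$ crosses $j_0^{\pm}$, and then read off the dual/non-dual dichotomy from the sign of the quadratic coefficient $-\nu_2/a$ at the bifurcation value. Your version is somewhat more scrupulous about the standing hypotheses of Theorem~\ref{thm:HFlip} (explicitly locating the nearby critical point and noting that the local cone may shrink with $j$), whereas the paper simply records the first and second $u$-derivatives of $v_h$ at $u=0$ and invokes the parabola shape of $\{\nu_1=0\}$ directly.
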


\begin{proof}
To detect a Hamiltonian flip bifurcation it suffices to study the level sets $H_{j,t}^{-1}(h)$ near the apex of the cone. Indeed, by Theorem \ref{thm:HFlip}, we need to show that the level sets $H_{j,t}^{-1}(h)$ intersects the local cone transversally, apart from $H_{j,t}^{-1}(0)$, which touches the apex of the cone. Furthermore, if the slope of $H_{j,t}^{-1}(0)$ is positive there are more intersection points with the local cone, while if it is negative there is a neighbourhood of the apex with no more intersections.

The level set $H_{j,t}^{-1}(h)$ can be written as a function of $u$ by solving $H_{j,t}(u,v,w) = h$ for $v$. We denote this solution by $v_{h}(u,j,t)$:
\begin{equation*}
v_{h}(u,j,t) = \frac{1}{a} \left( h - \frac{4b}{3}u^{3} - \nu_{1}(j,t)u - \nu_{2}(j,t)u^{2} \right).
\end{equation*}
Its derivative with respect to $u$ is
\begin{equation*}
\frac{\partial v_{h}}{\partial u}(u,j,t) = -\frac{1}{a} (4bu^{2} + \nu_{1}(j,t) + 2\nu_{2}(j,t)u).
\end{equation*}
The apex occurs at $u = 0$, hence we calculate
\begin{align*}
\frac{\partial v_{h}}{\partial u}(0,j,t) = - \frac{1}{a}\nu_{1}(j,t),
\end{align*}
which vanishes at $(j,t) = (j_{0}^{\pm},\tau)$. By assumption $\frac{\partial \nu_{1}}{\partial j}(0,0) = 0$, $\frac{\partial^{2} \nu_{1}}{\partial j^{2}}(0,0) \neq 0$, and $\frac{\partial \nu_{1}}{\partial t}(0,0) \neq 0$. Thus, in a neighbourhood of the origin, the solution to $\nu_{1}(j,t) = 0$ has the shape of a parabola. As we pass from $j < j_{0}^{+}$ to $j > j_{0}^{+}$ (respectively from $j < j_{0}^{-}$ to $j > j_{0}^{-}$) we pass through $\nu_{1}(j_{0}^{+},\tau) = 0$ (respectively $\nu_{1}(j_{0}^{-},\tau) = 0$), i.e.\ $\nu_{1}(j,\tau)$ changes sign, and hence so does $\frac{\partial v_{h}}{\partial u}(0,j,\tau)$. 

To check if the level sets intersects the local cone transversally, and whether the bifurcation is dual or not, we consider the second derivative of $v_{0}(u,j,t)$ evaluated at $(u,j,t) = (0,j_{0}^{\pm},\tau)$. We find
\begin{equation*}
\frac{\partial^{2} v_{h}}{\partial u^{2}}(u,j,t) = -\frac{2}{a} (4bu + \nu_{2}(j,t))
\implies
\frac{\partial^{2} v_{0}}{\partial u^{2}}(0,j_{0}^{\pm},\tau) = - \frac{2}{a} \nu_{2}(j_{0}^{\pm},\tau).
\end{equation*}
Since we assume $\nu_{2}(j_{0}^{\pm},\tau) \neq 0$ the second derivative does not vanish, and the level sets intersects the local cone transversally. If $\frac{\partial^{2} v_{0}}{\partial u^{2}}(0,j_{0}^{\pm},\tau)$ is positive, then the level set $H_{j,t}^{-1}(0)$ is a concave-up parabola, which happens if $a \nu_{2}(j_{0}^{\pm},\tau) < 0$. In this case there are additional intersection points of $H_{j_{0}^{\pm},\tau}^{-1}(0)$ with the local cone. If $\frac{\partial^{2} v_{0}}{\partial u^{2}}(0,j_{0}^{\pm},\tau)$ is negative, then the level set is a concave-down parabola, which happens if $a \nu_{2}(j_{0}^{\pm},\tau) > 0$, and there are no further intersection with the local cone.
\end{proof}
\section{Normal form} \label{sec:normal-form}

In this section we find a normal form for Hamiltonian systems going through a bifurcation which we name the double flip bifurcation. This is a bifurcation which produces two Hamiltonian flip bifurcations in the bifurcation diagram. More precisely, the system depends on two parameters $t$ and $j$, where $t$ is the parameter governing the double flip bifurcation, and $j$ is the parameter governing the Hamiltonian flip bifurcations. As $t$ passes through the point of bifurcation, then two Hamiltonian flip bifurcations with respect to $j$ are introduced in the system.

We find a normal form for Hamiltonians going through double flip bifurcations by showing that all functions with a $6$-jet of the same form as \eqref{eq:H-intro}, with $\nu_{1}$ satisfying the conditions \eqref{list:saddle-node}, are equivalent in the sense defined below. This we achieve by following the procedure described by Mather \cite{Mather1968} and Gibson \cite[Section IV]{Gibson1979}.

Let $M,N$ be manifolds, and let $x \in M$ be a point in $M$. Two mappings $f_{i} : M \to N$, with $i \in \{1,2\}$, define the same \emph{germ at $x$} if there exists a neighbourhood $U \subset M$ of $x$ such that $f_{1}|_{U} = f_{2}|_{U}$. A germ $f : M \to N$ is \emph{invertible} if there exists a germ $g : N \to M$ such that $f \circ g = 1_{N}$ and $g \circ f = 1_{M}$, where $1_{M}$ (respectively $1_{N}$) is the identity in $M$ (respectively $N$).

\begin{definition}
Let $M_{1}$, $M_{2}$ and $N$ be manifolds. We call two germs of functions $f_{i} : M_{i} \to N$, with $i \in \{1,2\}$, \emph{equivalent} if there exists an invertible germ $g : M_{2} \to M_{1}$ such that $f_{1} \circ g = f_{2}$.
\end{definition}

We study germs $f : M \to \R$ at $x \in M$, and therefore only the local behaviour of $f$ is of concern. Hence, we may simply consider a chart at $x$, allowing us to assume $M = \R^{n}$, with $\dim M = n$. Thus, we will instead study germs $f : \R^{n} \to \R$ at $0 \in \R^{n}$. In particular, we are interested in the case $n = 2$. Let us denote by $\mathcal{E}$ the algebra of all germs $f : \R^{2} \to \R$ at $0$. To classify the germs in $\mathcal{E}$ one often considers their Taylor expansion. The Taylor expansion of a germ $f \in \mathcal{E}$ truncated at $k$-th order is called the \emph{$k$-jet} of $f$. 

\begin{definition}
A germ $f \in \mathcal{E}$ is said to be \emph{$k$-determined} when any germ $g \in \mathcal{E}$ with same $k$-jet as $f$ is equivalent to $f$.
\end{definition}

Below we recall a theorem giving a condition for $k$-determinacy of germs. The theorem makes use of two ideals in the algebra $\mathcal{E}$. Namely, we are interested in the Jacobian ideal $\mathcal{J}_{f}$, generated by the partial derivatives of $f$, and the maximal ideal $\mathcal{M}$ generated by the monomials in $\mathcal{E}$. In coordinates $\{x_{1},x_{2},\dots\}$ on $\R^{n}$, the ideals are written in terms of their generators as
\begin{align*}
\mathcal{J}_{f} = \left\langle \frac{\partial f}{\partial x_{1}}, \frac{\partial f}{\partial x_{2}}, \dots \right\rangle 
\quad
\text{and}
\quad
\mathcal{M} = \langle x_{1}, x_{2}, \dots \rangle.
\end{align*}
We also introduce the ideal $\mathcal{M}^{k}$, which is generated by all monomials of degree $k$.

\begin{theorem}[{\cite[Theorem 3.1, p.\ 117]{Gibson1979}}]
Let $f \in \mathcal{E}$ be such that $\mathcal{M}^{k} \subset \mathcal{M}\mathcal{J}_{f}$. Then $f$ is $k$-determined.
\end{theorem}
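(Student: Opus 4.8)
The plan is to prove the classical finite‑determinacy theorem by the homotopy (Moser path) method. Suppose $g \in \mathcal{E}$ has the same $k$-jet as $f$; since their Taylor expansions agree up to order $k$ this means $g - f \in \mathcal{M}^{k+1}$, and I must produce an invertible germ $\phi\colon\R^{n}\to\R^{n}$ at $0$ with $f\circ\phi = g$. First I would interpolate, setting $F_{t} := f + t(g-f)$ for $t\in[0,1]$, so $F_{0}=f$ and $F_{1}=g$, and look for a smooth family of germs of diffeomorphisms $\phi_{t}$ at $0$ with $\phi_{0}=\id$ and $F_{t}\circ\phi_{t}=f$ for all $t$. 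Writing $X_{t}$ for the time‑dependent vector field germ generating $\phi_{t}$ (so $\partial_{t}\phi_{t}=X_{t}\circ\phi_{t}$), differentiating the identity $F_{t}\circ\phi_{t}=f$ in $t$ and substituting $y=\phi_{t}(x)$ reduces everything to the homological equation $dF_{t}(X_{t}) = -(g-f)$ as an identity of germs, i.e.\ to finding germs $a_{1}(\cdot,t),\dots,a_{n}(\cdot,t)\in\mathcal{M}$ (vanishing at $0$, so that $\phi_{t}$ fixes $0$) with $g-f = \sum_{i} a_{i}\,\partial F_{t}/\partial x_{i}$. In other words it suffices to show $g-f \in \mathcal{M}\,\mathcal{J}_{F_{t}}$ for every $t\in[0,1]$.

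This is where the hypothesis $\mathcal{M}^{k}\subset\mathcal{M}\mathcal{J}_{f}$ enters, via Nakayama. Because $g-f\in\mathcal{M}^{k+1}$, each $\partial(g-f)/\partial x_{i}$ lies in $\mathcal{M}^{k}\subset\mathcal{M}\mathcal{J}_{f}$, so $\partial f/\partial x_{i} = \partial F_{t}/\partial x_{i} - t\,\partial(g-f)/\partial x_{i} \in \mathcal{J}_{F_{t}} + \mathcal{M}^{k}$; multiplying by $\mathcal{M}$ gives $\mathcal{M}\mathcal{J}_{f}\subset \mathcal{M}\mathcal{J}_{F_{t}} + \mathcal{M}^{k+1}$. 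Combined with the hypothesis this yields $\mathcal{M}^{k}\subset \mathcal{M}\mathcal{J}_{F_{t}} + \mathcal{M}\cdot\mathcal{M}^{k}$. Since $\mathcal{M}^{k}$ is a finitely generated $\mathcal{E}$-module (generated by the monomials of degree $k$) and $\mathcal{E}$ is a local ring with maximal ideal $\mathcal{M}$, Nakayama's lemma applied to the image of $\mathcal{M}^{k}$ in $\mathcal{E}/\mathcal{M}\mathcal{J}_{F_{t}}$ forces $\mathcal{M}^{k}\subset\mathcal{M}\mathcal{J}_{F_{t}}$, and hence $g-f\in\mathcal{M}^{k+1}=\mathcal{M}\cdot\mathcal{M}^{k}\subset\mathcal{M}\mathcal{J}_{F_{t}}$, as required. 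To obtain coefficients $a_{i}$ that are also smooth in the parameter $t$, one runs this argument once over the ring of germs along the segment $\{0\}\times[0,1]$ (a parametrized version of the division/Nakayama step), which is where the compactness of $[0,1]$ is used.

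Finally I would integrate: set $X_{t} = -\sum_{i} a_{i}\,\partial/\partial x_{i}$ and solve $\partial_{t}\phi_{t} = X_{t}\circ\phi_{t}$ with $\phi_{0}=\id$. Since each $X_{t}$ vanishes at $0$ and $[0,1]$ is compact, the flow $\phi_{t}$ is defined for all $t\in[0,1]$ on a common neighbourhood of $0$, fixes $0$, and each $\phi_{t}$ is a diffeomorphism onto its image (with inverse the backward flow). The computation $\frac{d}{dt}\bigl(F_{t}\circ\phi_{t}\bigr) = (g-f)\circ\phi_{t} + \bigl(dF_{t}(X_{t})\bigr)\circ\phi_{t} = 0$ then gives $F_{t}\circ\phi_{t}\equiv F_{0}=f$; at $t=1$ this reads $g\circ\phi_{1}=f$, so $\phi:=\phi_{1}^{-1}$ satisfies $f\circ\phi = g$ and $g$ is equivalent to $f$. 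I expect the second paragraph to be the crux: the algebra is a routine Nakayama argument, but in the smooth category (as opposed to the analytic or formal one) the delicate point is that the coefficients $a_{i}$ can be chosen to depend smoothly on $t$, which is why one must work with germs along $\{0\}\times[0,1]$ rather than at a single point.
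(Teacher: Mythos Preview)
The paper does not give its own proof of this statement: it is quoted from Gibson's book and used as a black box to establish Lemma~\ref{lem:determinacy}. Your outline is the standard Moser path/Nakayama argument for finite determinacy (and is essentially the argument Gibson gives), so there is nothing to compare against here; the proposal is correct.
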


Using this theorem we prove the following lemma:

\begin{lemma} \label{lem:determinacy}
Let $j$ and $t$ be two parameters. The Hamiltonian 
\begin{equation*}
H_{j,t}(q,p) = \frac{a}{2}p^{2} + \frac{b}{6}q^{6} + \frac{\nu_{1}(j,t)}{2}q^{2} + \frac{\nu_{2}(j,t)}{4}q^{4},
\end{equation*}
for $a,b \in \R \setminus \{0\}$ and $\nu_{1},\nu_{2} : \R^{2} \to \R$ smooth functions such that $\nu_{1}(0,0) = 0 = \nu_{2}(0,0)$, is $6$-determined.
\end{lemma}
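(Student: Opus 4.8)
The plan is to apply the $k$-determinacy criterion quoted just above, namely Gibson's Theorem, with $k=6$ and $n=2$ (coordinates $q,p$): it suffices to verify the ideal inclusion $\mathcal{M}^{6}\subseteq \mathcal{M}\mathcal{J}_{H_{j,t}}$. Throughout, the parameters $j,t$ are held fixed, so $\nu_{1}(j,t)$ and $\nu_{2}(j,t)$ are honest real numbers and $H_{j,t}$ is a germ in the algebra $\mathcal{E}$ of germs $(\R^{2},0)\to\R$. First I would compute the two generators of the Jacobian ideal,
\begin{equation*}
\frac{\partial H_{j,t}}{\partial p}=ap,
\qquad
\frac{\partial H_{j,t}}{\partial q}=bq^{5}+\nu_{2}(j,t)\,q^{3}+\nu_{1}(j,t)\,q
= q\bigl(bq^{4}+\nu_{2}(j,t)\,q^{2}+\nu_{1}(j,t)\bigr).
\end{equation*}
Since $a\neq 0$, the element $p$ itself lies in $\mathcal{J}_{H_{j,t}}$.

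The key observation is that the second factor $bq^{4}+\nu_{2}(j,t)q^{2}+\nu_{1}(j,t)$ involves only even powers of $q$, so the largest power of $q$ dividing $\partial H_{j,t}/\partial q$ is odd, and after extracting it the remaining factor is a unit in $\mathcal{E}$: if $\nu_{1}(j,t)\neq 0$ the derivative equals $q\cdot(\text{unit})$; if $\nu_{1}(j,t)=0\neq\nu_{2}(j,t)$ it equals $q^{3}\cdot(\text{unit})$; and if $\nu_{1}(j,t)=\nu_{2}(j,t)=0$ it equals $bq^{5}$. (The hypothesis $\nu_{1}(0,0)=\nu_{2}(0,0)=0$ merely records that the organizing centre $(j,t)=(0,0)$ is of the last, $A_{5}$, type, which is the hardest one; recall also that a germ which is $k$-determined is automatically $k'$-determined for every $k'\geq k$, so the cases with fewer vanishing coefficients cause no trouble.) Consequently, for each fixed $(j,t)$ we get $\mathcal{J}_{H_{j,t}}=\langle q^{r},p\rangle$ with $r\in\{1,3,5\}$, and hence $\mathcal{M}\mathcal{J}_{H_{j,t}}=\langle q^{r+1},\,q^{r}p,\,qp,\,p^{2}\rangle$, remembering that the product ideal is generated by the pairwise products of the generators $q,p$ of $\mathcal{M}$ with the generators $q^{r},p$ of $\mathcal{J}_{H_{j,t}}$.

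It then remains to see that $\mathcal{M}^{6}\subseteq \langle q^{r+1},q^{r}p,qp,p^{2}\rangle$. Every degree-$6$ monomial $q^{i}p^{j}$ with $j\geq 1$ is a multiple of $qp$ (when $i\geq 1$) or of $p^{2}$ (when $i=0$), and the pure power $q^{6}$ is a multiple of $q^{r+1}$ since $r+1\leq 6$. Thus $\mathcal{M}^{6}\subseteq\mathcal{M}\mathcal{J}_{H_{j,t}}$, and Gibson's Theorem gives that $H_{j,t}$ is $6$-determined. I do not expect a genuine obstacle here: once the parameters are frozen the argument is a one-line Jacobian-ideal computation followed by elementary ideal bookkeeping. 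The only points deserving care are (i) that $\mathcal{M}\mathcal{J}_{H_{j,t}}$ is a \emph{product} of ideals, so one must multiply the generators of $\mathcal{M}$ into $\mathcal{J}_{H_{j,t}}$ rather than work with $\mathcal{J}_{H_{j,t}}$ alone, and (ii) the harmless case split above, which is nothing more than the observation that $H_{j,t}$ is an $A_{1}$, $A_{3}$, or $A_{5}$ singularity in the $q$-direction, each of which is at most $6$-determined.
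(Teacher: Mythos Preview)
Your proof is correct and follows essentially the same approach as the paper: both compute the Jacobian ideal and verify the inclusion $\mathcal{M}^{6}\subseteq\mathcal{M}\mathcal{J}_{H_{j,t}}$ required by Gibson's criterion. The only cosmetic difference is that where you do an explicit case split on which of $\nu_{1}(j,t),\nu_{2}(j,t)$ vanish to factor $\partial_{q}H_{j,t}=q^{r}\cdot(\text{unit})$, the paper packages the same observation into a single line by writing $q^{6}=F_{6}(q)\cdot q f_{j,t}(q)$ with $F_{6}(q)=q^{6}/(qf_{j,t}(q))$ and noting that $F_{6}$ is a smooth germ for every parameter value because $b\neq 0$.
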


\begin{remark}
In Lemma \ref{lem:determinacy} the additional conditions on $\nu_{1}(j,t)$ from \eqref{list:saddle-node} are not necessary.
\end{remark}

\begin{proof}[Proof of Lemma \ref{lem:determinacy}]
Here the Jacobian ideal is
\begin{equation*}
\mathcal{J}_{H_{j,t}}
= \langle ap,\underbrace{bq^{5}+\nu_{1}(j,t)q+\nu_{2}(j,t)q^{3}}_{=: f_{j,t}(q)} \rangle.
\end{equation*}
Thus, the product ideal is 
\begin{equation*}
\mathcal{M}\mathcal{J}_{H_{j,t}} = \langle p,q \rangle \langle ap,f_{j,t}(q) \rangle.
\end{equation*}
The first factor of $\mathcal{J}_{H_{j,t}}$, namely $ap$, combined with $\mathcal{M}$ yields all generators of $\mathcal{M}^{6}$ but $q^{6}$. Hence, we only need to show that the ideal $\langle f_{j,t}(q) \rangle \langle p,q \rangle$ contains $q^{6}$. Let us denote by $g(q)$ the generator $g(q) = qf_{j,t}(q) \in \mathcal{M}\mathcal{J}_{H_{j,t}}$, and consider the germ
\begin{equation*}
F_{k}(q) = \frac{q^{k}}{g(q)} = \frac{q^{k}}{bq^{6} + \nu_{1}(j,t)q^{2} + \nu_{2}(j,t)q^{4}}.
\end{equation*}
This is a smooth germ in $\mathcal{E}$ for all of $\R^{2}$ for $k \geq 6$ since $b \neq 0$. For $k < 6$ it is not smooth near $q = 0$ since $\nu_{1}(0,0) = 0 = \nu_{2}(0,0)$. Furthermore,
\begin{equation*}
q^{6} = F_{6}(q) g(q),
\end{equation*}
so also $q^{6} \in \mathcal{M}J_{H_{j,t}}$. Hence, $\mathcal{M}^{6} \subset \mathcal{M}J_{H_{j,t}}$, and the lemma has been proven.
\end{proof}

The Lemmas \ref{lem:H-flip} and \ref{lem:determinacy} imply the following theorem:

\begin{theorem} \label{thm:normal-form}
Assume that there are coordinate changes by smooth functions such that the $6$-jet of a function $f_{j,t}(q,p)$ depending on two parameters $j$ and $t$ can be put in the following form:
\begin{equation*}
H_{j,t}(q,p) = \frac{a}{2}p^{2} + \frac{b}{6}q^{6} + \frac{\nu_{1}(j,t)}{2}q^{2} + \frac{\nu_{2}(j,t)}{4}q^{4}
\end{equation*}
where $a,b \in \R \setminus \{0\}$, and $\nu_{1}(0,0) = 0 = \nu_{2}(0,0)$. Furthermore, we assume $\nu_{1}(j,t)$ satisfies
\begin{align*}
\begin{cases}
\frac{\partial \nu_{1}}{\partial j}(0,0) = 0, \\
\frac{\partial^{2} \nu_{1}}{\partial j^{2}}(0,0) \neq 0, \\
\frac{\partial \nu_{1}}{\partial t}(0,0) \neq 0.
\end{cases}
\end{align*}
Then the system given by $f_{j,t}(q,p)$ goes through a double flip bifurcation as $t$ passes through $0$.

For $t = \tau$ sufficiently close to $0$, let $j_{0}^{\pm} = j_{0}^{\pm}(\tau)$ be such that $\nu_{1}(j_{0}^{\pm},\tau) = 0$ and $\nu_{2}(j_{0}^{\pm},\tau) \neq 0$. Then the Hamiltonian flip bifurcations occur for $(j,t) = (j_{0}^{\pm},\tau)$. If $a \nu_{2}(j_{0}^{\pm},\tau) < 0$, then the bifurcation is dual, and if $a \nu_{2}(j_{0}^{\pm},\tau) > 0$ it is not.
\end{theorem}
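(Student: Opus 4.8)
The theorem is essentially a corollary of Lemmas~\ref{lem:H-flip} and~\ref{lem:determinacy}, and the plan is to spell out how the two combine. By hypothesis there is a smooth, parameter-dependent change of coordinates after which $f_{j,t}$ has the same $6$-jet as $H_{j,t}$, and Lemma~\ref{lem:determinacy} shows that $H_{j,t}$ is $6$-determined for each fixed $(j,t)$ near $(0,0)$ (the only conditions it needs, $\nu_1(0,0)=\nu_2(0,0)=0$, are assumed). Hence for every such $(j,t)$ the germ of $f_{j,t}$ at the origin of $\R^2$ is equivalent to that of $H_{j,t}$: there is an invertible germ $\varphi$ with $H_{j,t}\circ\varphi=f_{j,t}$. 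Such a $\varphi$ is a diffeomorphism germ, so it carries level sets to level sets, sends the conical singular point to a conical singular point, and preserves transversality; in particular the sign of the second derivative $\partial_u^2 v_0$ at the apex, and the answer given by the detection criterion of Theorem~\ref{thm:HFlip}, are the same for $f_{j,t}$ as for $H_{j,t}$. It therefore suffices to establish all assertions for the normal form $H_{j,t}$, where they are exactly the content of Lemma~\ref{lem:H-flip}, once we know that the points $(j_0^\pm,\tau)$ exist.

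To produce those points I would analyse $\{\nu_1=0\}$ near the origin. The conditions \eqref{list:saddle-node} are the defining conditions for a saddle-node bifurcation of $\frac{dj}{dt}=\nu_1(j,t)$. Since $\partial_t\nu_1(0,0)\neq0$, the implicit function theorem solves $\nu_1(j,t)=0$ near $(0,0)$ as $t=\phi(j)$ with $\phi$ smooth; differentiating $\nu_1(j,\phi(j))\equiv0$ and using $\nu_1(0,0)=0$, $\partial_j\nu_1(0,0)=0$, $\partial_j^2\nu_1(0,0)\neq0$ gives $\phi(0)=\phi'(0)=0$ and $\phi''(0)=-\partial_j^2\nu_1(0,0)/\partial_t\nu_1(0,0)\neq0$. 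Thus $\{\nu_1=0\}$ is, locally, a parabola tangent to the $j$-axis at the origin, and inverting $t=\phi(j)$: for each $\tau$ small and of one definite sign there are exactly two roots $j_0^-(\tau)<0<j_0^+(\tau)$ of $j\mapsto\nu_1(j,\tau)$, both tending to $0$ as $\tau\to0$; for $\tau=0$ they merge at $j=0$; and for $\tau$ of the opposite sign there are none in a neighbourhood of $0$.

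Now fix $\tau$ sufficiently close to $0$ on the side where the two roots exist. Then $\nu_1(j_0^\pm(\tau),\tau)=0$ and, by hypothesis, $\nu_2(j_0^\pm(\tau),\tau)\neq0$; since $(j_0^\pm(\tau),\tau)$ lies in the neighbourhood of the origin where Lemma~\ref{lem:H-flip} applies, that lemma yields a Hamiltonian flip bifurcation (with respect to $j$) at each of the two points, dual when $a\nu_2(j_0^\pm,\tau)<0$ and non-dual when $a\nu_2(j_0^\pm,\tau)>0$. Combining this with the count from the previous paragraph: as $t$ passes through $0$, the family acquires, with respect to $j$, two Hamiltonian flip bifurcations that are absent for $t$ on the other side and that coalesce at $t=0$ — by definition, a double flip bifurcation — of the stated duality types. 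Transferring back along $\varphi$ gives the same conclusion for $f_{j,t}$, which completes the proof.

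There is no serious obstacle here; the only point that wants care is the reduction in the first paragraph, because $6$-determinacy is a statement about individual germs at individual parameter values while the conclusion concerns the family as $t$ varies. One should therefore either observe that the change of coordinates can be taken to depend smoothly on $(j,t)$, or — as above — argue directly that the finitely many geometric ingredients used to detect the (dual) Hamiltonian flip bifurcation (the conical point, the level sets near it, their transversality with the local cone, and the concavity $\partial_u^2 v_0$) are equivalence-invariant and depend on $(j,t)$ in the same way for $f_{j,t}$ and $H_{j,t}$. The saddle-node analysis of $\nu_1$ and the invocation of Lemma~\ref{lem:H-flip} are then routine.
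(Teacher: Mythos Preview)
Your proposal is correct and follows the same route as the paper: the paper simply states that Theorem~\ref{thm:normal-form} is implied by Lemmas~\ref{lem:H-flip} and~\ref{lem:determinacy}, with no further argument. You have in fact supplied more than the paper does --- the explicit saddle-node analysis producing the two roots $j_0^\pm(\tau)$ (which the paper only sketches in the introduction) and the caveat about parameter-dependence of the equivalence coming from $6$-determinacy (which the paper does not raise at all) --- so your write-up is, if anything, more complete than the original.
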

\section{The bifurcation diagram} \label{sec:bif-diag}

In this section we study the bifurcation diagram for the Hamiltonian \eqref{eq:H-intro}. To this end, let us consider the derivative of $H_{j,t}(q,p)$:
\begin{equation*}
dH_{j,t}(q,p) = \left( \frac{\partial H_{j,t}}{\partial q}, \frac{\partial H_{j,t}}{\partial p} \right) = ( bq^{5} + \nu_{1}(j,t)q + \nu_{2}(j,t)q^{3}, ap ).
\end{equation*}
Thus, any critical point must satisfy $p = 0$. Furthermore, one solution for the first component to vanish is $q = q_{1} := 0$. The other critical points are given by the solutions to the following equation:
\begin{equation} \label{eq:dH/q}
bq^{4} + \nu_{1}(j,t) + \nu_{2}(j,t)q^{2} = 0.
\end{equation}
Denote the discriminant by
\begin{equation*}
\disc_{b}(j,t) := \nu_{2}(j,t)^{2} - 4b\nu_{1}(j,t).
\end{equation*}
Then the solutions to \eqref{eq:dH/q} are given by
\begin{align*}
&q = q_{2}^{-}(j,t) := \pm \frac{1}{\sqrt{2b}} \sqrt{-\nu_{2}(j,t) - \sqrt{\disc_{b}(j,t)}}, \\
&q = q_{2}^{+}(j,t) := \pm \frac{1}{\sqrt{2b}} \sqrt{-\nu_{2}(j,t) + \sqrt{\disc_{b}(j,t)}}.
\end{align*}
Note that the sign of $q_{2}^{-}(j,t)$ and $q_{2}^{+}(j,t)$ do not play a big role for us, as all the $q$-terms of the Hamiltonian are of even power. We have found that the critical points are given by $(q,p) = (0,0)$, and $(q,p) = (q_{2}^{\pm}(j,t),0)$. 

Next, let us consider the type of the singularities, and how they depend on the parameters $j$ and $t$. This we do by computing the eigenvalues of the linearised system. Let $\Omega$ be the matrix of the canonical symplectic form. Then the linearised Hamiltonian vector field is
\begin{align*}
\Omega d^{2}H_{j,t}(q,p) &=
\begin{pmatrix}
0 & 1 \\
-1 & 0
\end{pmatrix}
\begin{pmatrix}
5bq^{4} + \nu_{1}(j,t) + 3\nu_{2}(j,t)q^{2} & 0 \\
0 & a
\end{pmatrix} \\&=
\begin{pmatrix}
0 & -a \\
5bq^{4} + \nu_{1}(j,t) + 3\nu_{2}(j,t)q^{2} & 0
\end{pmatrix}.
\end{align*}
We denote the eigenvalues for the critical point $(q,p) = (q_{1}(j,t),0) = (0,0)$ by $\lambda_{1}(j,t)$, and for the critical points $(q,p) = (q_{2}^{\pm}(j,t),0)$ by $\lambda_{2}^{\pm}(j,t)$. The eigenvalues are
\begin{align*}
&\lambda_{1}(j,t) = \pm i \sqrt{a\nu_{1}(j,t)}, \\
&\lambda_{2}^{-}(j,t) = \pm \sqrt{ \frac{a}{b} \left( 4b\nu_{1}(j,t) - \nu_{2}(j,t)(\sqrt{\disc_{b}(j,t)} + \nu_{2}(j,t)) \right) }, \\
&\lambda_{2}^{+}(j,t) = \pm \sqrt{ \frac{a}{b} \left( 4b\nu_{1}(j,t) + \nu_{2}(j,t)(\sqrt{\disc_{b}(j,t)} - \nu_{2}(j,t)) \right) }.
\end{align*}
Note that $\lambda_{1}(j,t)$ vanishes when $\nu_{1}(j,t)$ vanishes. Furthermore, if $\disc_{b}(j,t)$ vanishes, i.e.\ $4b\nu_{1}(j,t) = \nu_{2}(j,t)^{2}$, then so does $\lambda_{2}^{\pm}(j,t)$ vanish. Thus, the solutions to $\nu_{1}(j,t) = 0$ and $\disc_{b}(j,t) = 0$ yield degenerate singularities, as the eigenvalues vanish. Notice also that when $\disc_{b}(j,t) = 0$, then
\begin{equation*}
q_{2}^{-}(j,t)|_{\disc_{b}(j,t)=0} = q_{2}^{+}(j,t)|_{\disc_{b}(j,t)=0} = \pm \frac{1}{\sqrt{2b}}\sqrt{-\nu_{2}(j,t)}.
\end{equation*}
Since $q \in \R$, along $\disc_{b}(j,t) = 0$ we require $\nu_{2}(j,t) < 0$.

Now, let us study when the eigenvalues $\lambda_{1}(j,t)$ and $\lambda_{2}^{\pm}(j,t)$ are real (i.e.\ the singularity is a saddle) and imaginary (i.e.\ the singularity is a centre). It is immediate that $\lambda_{1}(j,t)$ is real if $a\nu_{1}(j,t) < 0$ and imaginary if $a\nu_{1}(j,t) > 0$. Next, note that, since $q_{2}^{\pm}(j,t) \in \R$, then $\disc_{b}(j,t) > 0$ and $\nu_{2}(j,t) < \pm \sqrt{\disc_{b}(j,t)}$ for all $j,t$. Thus, for all $j,t$ there exists some $\epsilon > 0$ such that $\nu_{2}(j,t) = \pm \sqrt{\disc_{b}(j,t)} - \epsilon$. Then the eigenvalues, up to sign, become
\begin{align*}
\lambda_{2}^{\pm}(j,t) &
= \sqrt{\frac{a}{b} \left( \nu_{2}(j,t)^{2} - \disc_{b}(j,t) \pm \nu_{2}(j,t) (\sqrt{\disc_{b}(j,t)} \mp \nu_{2}(j,t)) \right)} \\&
= \sqrt{\mp \frac{a}{b} \epsilon \sqrt{\disc_{b}(j,t)}}.
\end{align*}
Since $\epsilon \sqrt{\disc_{b}(j,t)} > 0$, we get that $\lambda_{2}^{+}(j,t)$ is real if $ab < 0$ and imaginary if $ab > 0$, and $\lambda_{2}^{-}(j,t)$ is real if $ab > 0$ and imaginary if $ab < 0$. Thus we have proven the following theorem:

\begin{theorem} \label{thm:sing-type}
Consider a $2$-parameter Hamiltonian with $6$-jet
\begin{equation*}
H_{j,t}(q,p) = \frac{a}{2}p^{2} + \frac{b}{6}q^{6} + \frac{\nu_{1}(j,t)}{2}q^{2} + \frac{\nu_{2}(j,t)}{4}q^{4},
\end{equation*}
where $a,b \in \R \setminus \{0\}$ and $\nu_{1},\nu_{2} : \R^{2} \to \R$. Then the critical points are $(q_{1}(j,t),0)$, and $(q_{2}^{\pm}(j,t),0)$, where
\begin{align*}
&q_{1}(j,t) = 0, \\
&q_{2}^{-}(j,t) = \pm \frac{1}{\sqrt{2b}} \sqrt{-\nu_{2}(j,t) - \sqrt{\disc_{b}(j,t)}}, \\
&q_{2}^{+}(j,t) = \pm \frac{1}{\sqrt{2b}} \sqrt{-\nu_{2}(j,t) + \sqrt{\disc_{b}(j,t)}}.
\end{align*}
The type of the critical points are as follows:
\begin{itemize}
    \item $(q_{1}(j,t),0)$ is a centre if $a\nu_{1}(j,t) > 0$ and a saddle if $a\nu_{1}(j,t) < 0$.
    \item For all $j,t$ the critical point $(q_{2}^{-}(j,t),0)$ is a saddle if $ab > 0$ and a centre if $ab < 0$.
    \item For all $j,t$ the critical point $(q_{2}^{+}(j,t),0)$ is a centre if $ab > 0$ and a saddle if $ab < 0$.
\end{itemize}
For $\nu_{1}(j,t) = 0$ the singularity $(q_{1}(j,t),0)$ is degenerate. For $\disc_{b}(j,t) = 0$ the singularity $(q_{2}^{-}(j,t),0) = (q_{2}^{+}(j,t),0)$ is degenerate.
\end{theorem}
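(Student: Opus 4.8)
The plan is to read everything off from the first two derivatives of $H_{j,t}$, so the argument is essentially one computation organized into three stages. First I would compute $dH_{j,t}(q,p)=\bigl(bq^{5}+\nu_{1}q+\nu_{2}q^{3},\,ap\bigr)$. Since $a\neq 0$, a critical point must have $p=0$, and the first component factors as $q\,(bq^{4}+\nu_{2}q^{2}+\nu_{1})$. The factor $q$ gives $q_{1}=0$; treating $bq^{4}+\nu_{2}q^{2}+\nu_{1}=0$ as a quadratic in $q^{2}$ gives $q^{2}=(-\nu_{2}\pm\sqrt{\disc_{b}})/(2b)$ with $\disc_{b}=\nu_{2}^{2}-4b\nu_{1}$, and taking square roots yields the stated expressions $q_{2}^{\pm}$. (Wherever these expressions fail to be real there is simply no corresponding critical point, so one works on the locus where they are real.)

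Next I would linearize. The Hessian $d^{2}H_{j,t}$ is diagonal with entries $H_{qq}=5bq^{4}+\nu_{1}+3\nu_{2}q^{2}$ and $H_{pp}=a$, so $\Omega\,d^{2}H_{j,t}$ is traceless with determinant $a\,H_{qq}$ and characteristic polynomial $\lambda^{2}+a\,H_{qq}=0$. Hence the equilibrium is a centre when $a\,H_{qq}>0$, a saddle when $a\,H_{qq}<0$, and has a double zero eigenvalue --- i.e.\ is degenerate --- exactly when $a\,H_{qq}=0$. At $q_{1}=0$ one has $H_{qq}=\nu_{1}$, which immediately gives the classification of $(q_{1},0)$ and the fact that it degenerates precisely when $\nu_{1}=0$.

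The heart of the matter is evaluating $H_{qq}$ at $q_{2}^{\pm}$. Using the defining relation $b(q_{2}^{\pm})^{4}=-\nu_{2}(q_{2}^{\pm})^{2}-\nu_{1}$ to eliminate the quartic term, $H_{qq}$ collapses to $2(q_{2}^{\pm})^{2}\bigl(2b(q_{2}^{\pm})^{2}+\nu_{2}\bigr)$, and since $2b(q_{2}^{\pm})^{2}=-\nu_{2}\pm\sqrt{\disc_{b}}$ this becomes $\pm\,2(q_{2}^{\pm})^{2}\sqrt{\disc_{b}}$, which is exactly the content of the eigenvalue formulas $\lambda_{2}^{\pm}$ in the statement after algebraic rearrangement (equivalently, rewrite $\sqrt{\disc_{b}}\mp\nu_{2}$ using $\disc_{b}=\nu_{2}^{2}-4b\nu_{1}$). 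Determining the sign of this expression is the one nontrivial point: a priori the factor $\sqrt{\disc_{b}}\mp\nu_{2}$ has undetermined sign, and one must invoke that $q_{2}^{\pm}$ is a genuine real critical point, so $(q_{2}^{\pm})^{2}\ge 0$; concretely, feeding the reality condition on $q_{2}^{\pm}$ into $\sqrt{\disc_{b}}\mp\nu_{2}$ --- writing it as $\pm\epsilon$ with $\epsilon\ge 0$ --- fixes $\operatorname{sgn}\bigl(a\,H_{qq}|_{q_{2}^{\pm}}\bigr)$ and hence the centre/saddle dichotomy claimed. Finally $\disc_{b}=0$ makes $H_{qq}|_{q_{2}^{\pm}}$ vanish and simultaneously forces $q_{2}^{+}=q_{2}^{-}$, which is the last degeneracy assertion.

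So the only real obstacle is the sign bookkeeping in the last stage: one has to track the two square-root branches consistently and genuinely use the reality of the critical points, since the eigenvalue formulas on their own do not pin the signs down. Everything else is a routine evaluation of derivatives.
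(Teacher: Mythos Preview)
Your proposal is correct and follows essentially the same route as the paper: compute $dH_{j,t}$, factor to find the critical points, linearize to reduce the type question to the sign of $aH_{qq}$, and then use the reality of $q_{2}^{\pm}$ (the paper's ``write $\nu_{2}=\pm\sqrt{\disc_{b}}-\epsilon$ with $\epsilon>0$'' trick) to pin down the signs at $q_{2}^{\pm}$. The only minor difference is that you eliminate $q^{4}$ via the critical-point relation to obtain $H_{qq}|_{q_{2}^{\pm}}=\pm\,2(q_{2}^{\pm})^{2}\sqrt{\disc_{b}}$ directly, whereas the paper first writes out the eigenvalues $\lambda_{2}^{\pm}$ explicitly and simplifies afterwards; this is a cosmetic shortcut rather than a different argument.
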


In Figures \ref{fig:bif-diag-normal-form-1}, \ref{fig:bif-diag-normal-form-2}, \ref{fig:bif-diag-normal-form-3}, and \ref{fig:bif-diag-normal-form-4}, we draw bifurcation diagrams for certain choices of $\nu_{1}(j,t)$ and $\nu_{2}(j,t)$. In all figures the coefficients of $H_{j,t}(q,p)$ are $a = b = 1$. Note that in the figures (a) the location of the degenerate singularities is drawn. The solid curve corresponds to the solutions to $\nu_{1}(j,t) = 0$, and the dashed curve to the solutions to $\disc_{b}(j,t) = 0$. In Figure \ref{fig:bif-diag-normal-form-2} the dashed line corresponds to the negative part of the $j$-axis. Below we give a condition to determine when the curves corresponding to $\nu_{1}(j,t)$ and $\disc_{b}(j,t)$ are concave the same or the opposite direction. 

In the figures (b), (c), and (d), the bifurcation diagrams are drawn in red, to make the distinction to figures (a) clear. The solid lines correspond to the critical point $(q,p) = (0,0)$, the dashed curves to $(q,p) = (q_{2}^{-}(j,t),0)$, and the dashed-dotted curves to $(q,p) = (q_{2}^{+}(j,t),0)$. The type of the singularities can readily be found by Theorem \ref{thm:sing-type}. Indeed, in all figures $a = b = 1$, so $ab > 0$. Thus the dashed curves are saddles, the dashed-dotted are centres, while the solid line depends on $j$ and $t$. In all figures the whole solid line are centres for $t < 0$, while at $t = 0$ there are centres except for a degenerate singularity at the origin. In Figures \ref{fig:bif-diag-normal-form-1}, \ref{fig:bif-diag-normal-form-2}, and \ref{fig:bif-diag-normal-form-3}, the segment between the dashed and dashed-dotted curves are saddles, while the remainder apart from the degenerate singularities are centres. In Figure \ref{fig:bif-diag-normal-form-4} it is opposite; the middle segment are centres, and the sides saddles.


\begin{figure}[p]
\centering
\def\scale{0.35}

\begin{tabular}{cccc} 
 \subfloat[Degenerate singularities.]{\includegraphics[scale=\scale]{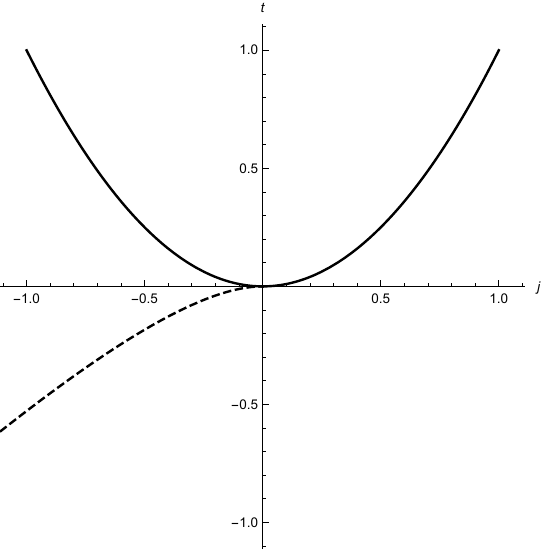}}
 & \subfloat[$t = -0.1$]{\includegraphics[scale=\scale]{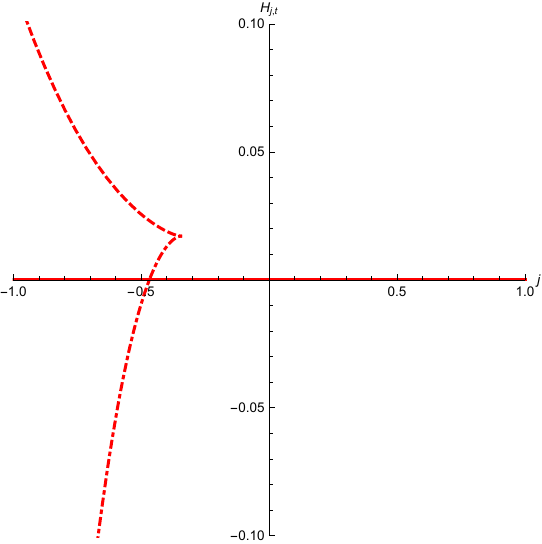}} 
 & \subfloat[$t = 0$]{\includegraphics[scale=\scale]{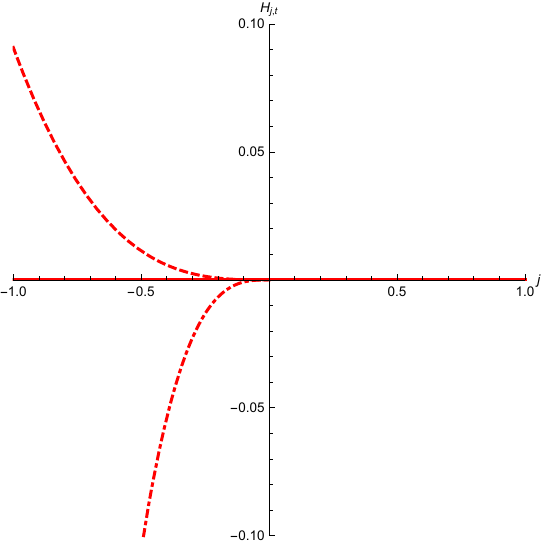}} 
 & \subfloat[$t = 0.1$]{\includegraphics[scale=\scale]{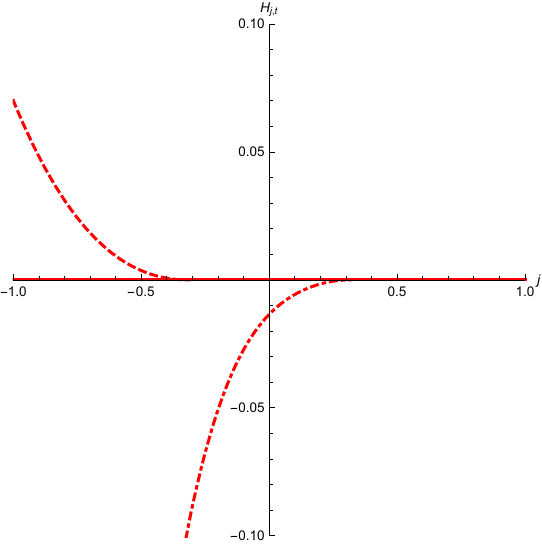}}
\end{tabular}
\caption{Here $a = b = 1$, and $\nu_{1}(j,t) = j^{2}-t$, $\nu_{2}(j,t) = 3j-t$.}
\label{fig:bif-diag-normal-form-1}

\begin{tabular}{cccc} 
 \subfloat[Degenerate singularities.]{\includegraphics[scale=\scale]{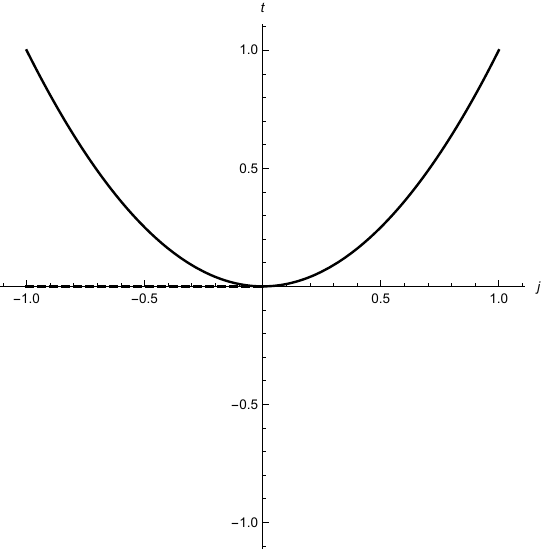}}
 & \subfloat[$t = -0.1$]{\includegraphics[scale=\scale]{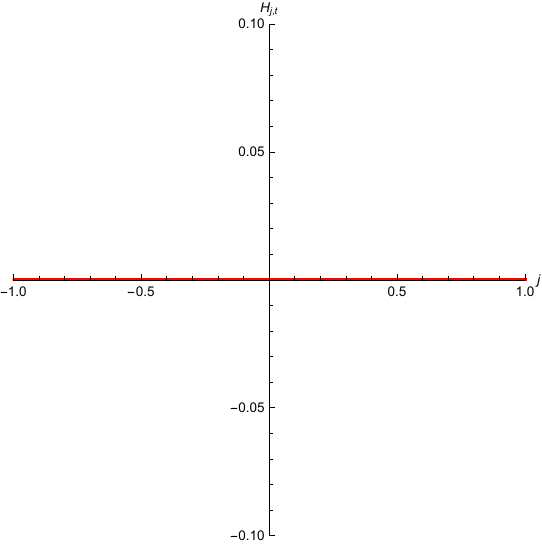}} 
 & \subfloat[$t = 0$]{\includegraphics[scale=\scale]{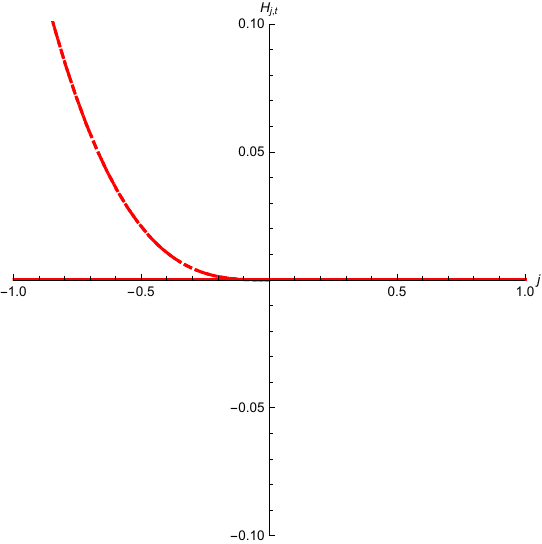}} 
 & \subfloat[$t = 0.1$]{\includegraphics[scale=\scale]{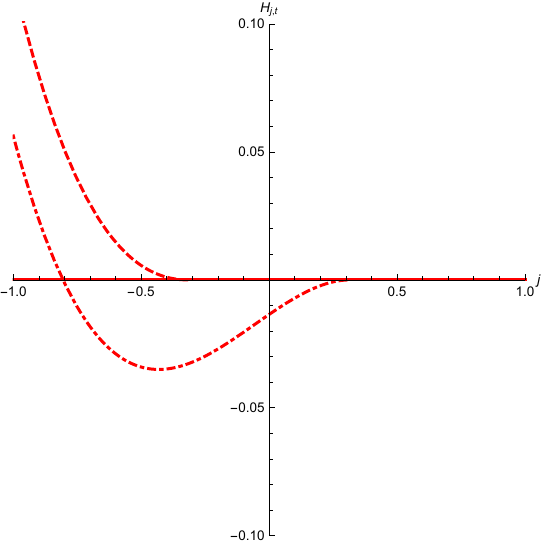}}
\end{tabular}
\caption{Here $a = b = 1$, and $\nu_{1}(j,t) = j^{2}-t$, $\nu_{2}(j,t) = 2j-t$.}
\label{fig:bif-diag-normal-form-2}

\begin{tabular}{cccc} 
 \subfloat[Degenerate singularities.]{\includegraphics[scale=\scale]{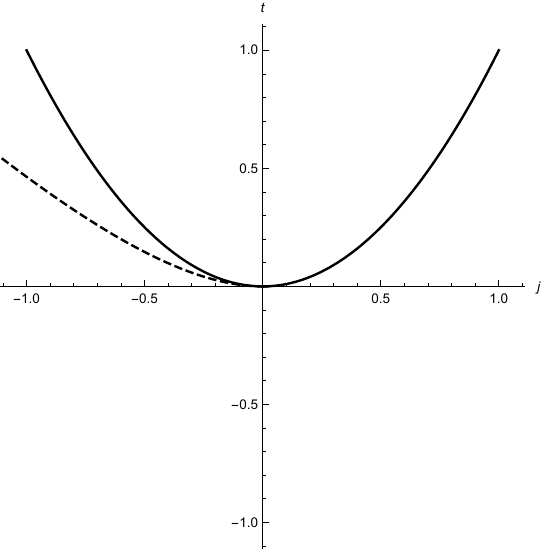}}
 & \subfloat[$t = -0.3$]{\includegraphics[scale=\scale]{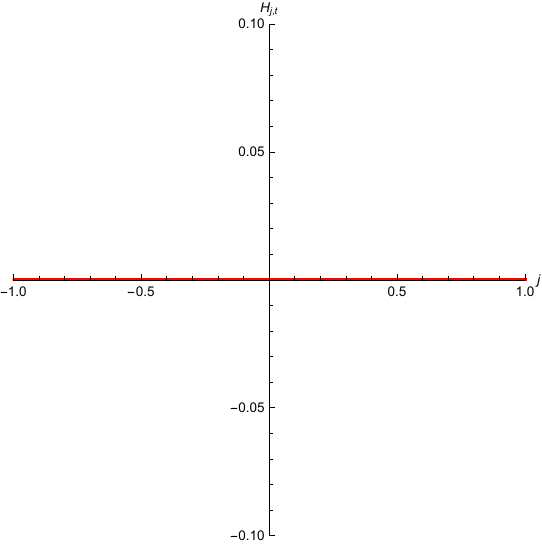}} 
 & \subfloat[$t = 0$]{\includegraphics[scale=\scale]{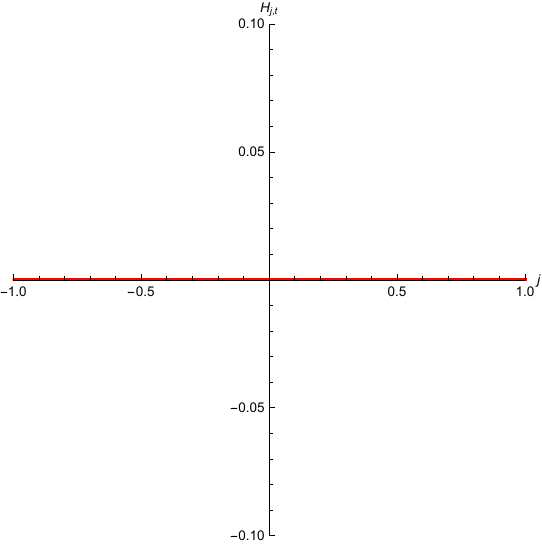}} 
 & \subfloat[$t = 0.3$]{\includegraphics[scale=\scale]{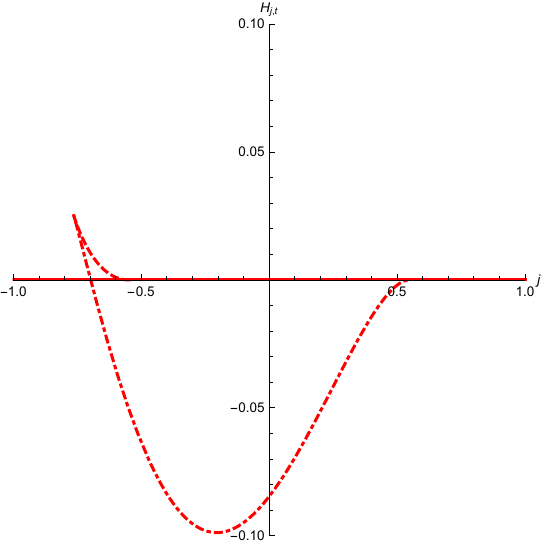}}
\end{tabular}
\caption{Here $a = b = 1$, and $\nu_{1}(j,t) = j^{2}-t$, $\nu_{2}(j,t) = j-t$.}
\label{fig:bif-diag-normal-form-3}

\begin{tabular}{cccc} 
 \subfloat[Degenerate singularities.]{\includegraphics[scale=\scale]{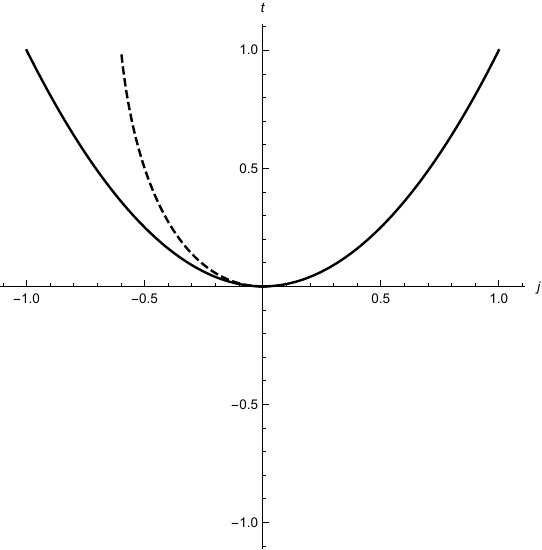}}
 & \subfloat[$t = -0.5$]{\includegraphics[scale=\scale]{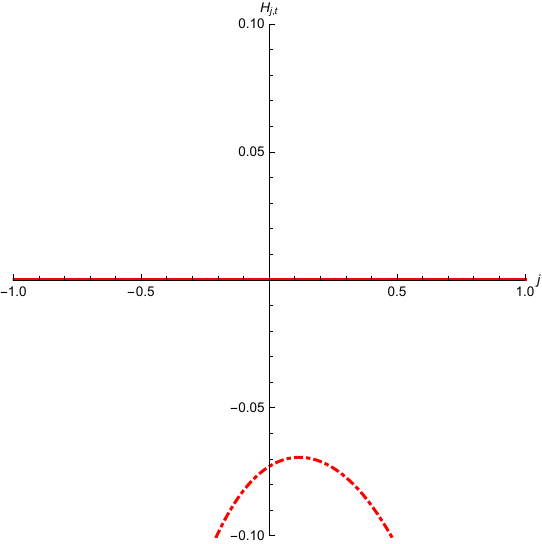}} 
 & \subfloat[$t = 0$]{\includegraphics[scale=\scale]{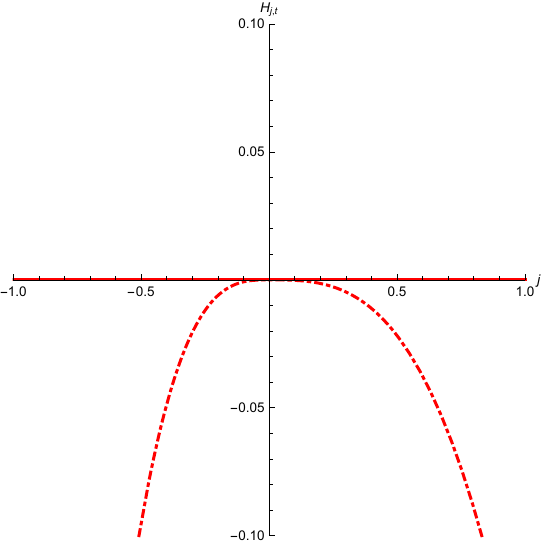}} 
 & \subfloat[$t = 0.5$]{\includegraphics[scale=\scale]{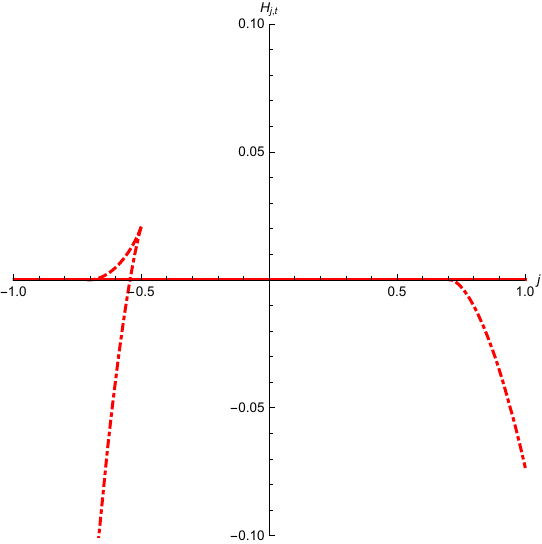}}
\end{tabular}
\caption{Here $a = b = 1$, and $\nu_{1}(j,t) = -j^{2}+t$, $\nu_{2}(j,t) = j-t$.}
\label{fig:bif-diag-normal-form-4}
\end{figure} 

In Theorem \ref{thm:sing-type} we did not use the assumption that $\nu_{1}(j,t)$ describes a saddle-node bifurcation. Let us now make use of this assumption, and study how this affects $\disc_{b}(j,t)$. We compute the following derivatives, and find:
\begin{align*}
&\frac{\partial \disc_{b}}{\partial j}(j,t) = 2\nu_{2}(j,t) \frac{\partial \nu_{2}}{\partial j}(j,t) - 4b \frac{\partial \nu_{1}}{\partial j}(j,t) \\
&\qquad \implies \frac{\partial \disc_{b}}{\partial j}(0,0) = 0, \\
&\frac{\partial^{2} \disc_{b}}{\partial j^{2}}(j,t) = 2 \left( \frac{\partial \nu_{2}}{\partial j}(j,t) \right)^{2} + 2\nu_{2}(j,t) \frac{\partial^{2} \nu_{2}}{\partial j^{2}}(j,t) - 4b \frac{\partial^{2} \nu_{1}}{\partial j^{2}}(j,t) \\
&\qquad \implies \frac{\partial^{2} \disc_{b}}{\partial j^{2}}(0,0) = 2 \left( \frac{\partial \nu_{2}}{\partial j}(0,0) \right)^{2} - 4b \frac{\partial^{2} \nu_{1}}{\partial j^{2}}(0,0), \\
&\frac{\partial \disc_{b}}{\partial t}(j,t) = 2\nu_{2}(j,t) \frac{\partial \nu_{2}}{\partial t}(j,t) - 4b \frac{\partial \nu_{1}}{\partial t}(j,t) \\
&\qquad \implies \frac{\partial \disc_{b}}{\partial t}(0,0) = - 4b \frac{\partial \nu_{1}}{\partial t}(0,0) \neq 0.
\end{align*}
Hence, whenever 
\begin{equation} \label{eq:non-deg}
\frac{\partial^{2} \disc_{b}}{\partial j^{2}}(0,0) \neq 0 
\Leftrightarrow
\left( \frac{\partial \nu_{2}}{\partial j}(0,0) \right)^{2} \neq 2b \frac{\partial^{2} \nu_{1}}{\partial j^{2}}(0,0),
\end{equation}
also $\disc_{b}(j,t)$ satisfies the conditions for a (non-degenerate) saddle-node bifurcation. Therefore we call the double flip bifurcation \emph{non-degenerate} if \eqref{eq:non-deg} is satisfied, and \emph{degenerate} otherwise. An example of a degenerate double flip bifurcation is given in Figure \ref{fig:bif-diag-normal-form-2}.

In the non-degenerate case we can find conditions for when the two saddle-node bifurcations associated with $\nu_{1}(j,t)$ and $\disc_{b}(j,t)$, i.e.\ the curves from figures (a) of Figures \ref{fig:bif-diag-normal-form-1}, \ref{fig:bif-diag-normal-form-3}, and \ref{fig:bif-diag-normal-form-4}, are concave up or down. Let us first recall that, since $\nu_{1}(j,t)$ is assumed to satisfy the conditions for a saddle-node bifurcation, we can write 
\begin{equation*}
\nu_{1}(j,t) = \frac{\partial^{2} \nu_{1}}{\partial j}(0,0) j^{2} + \frac{\partial \nu_{1}}{\partial t}(0,0) t 
\end{equation*}
near $(j,t) = (0,0)$. Thus, the saddle-node is
\begin{itemize}
    \item concave up if $\frac{\partial^{2} \nu_{1}}{\partial j^{2}}(0,0) \frac{\partial \nu_{1}}{\partial t}(0,0) < 0$, and 
    \item concave down if $\frac{\partial^{2} \nu_{1}}{\partial j^{2}}(0,0) \frac{\partial \nu_{1}}{\partial t}(0,0) > 0$.
\end{itemize}
Likewise, we find that the product in the condition for concavity up/down for $\disc_{b}(j,t)$ is
\begin{equation*}
\frac{\partial^{2} \disc_{b}}{\partial j^{2}}(0,0) \frac{\partial \disc_{b}}{\partial t}(0,0) 
= 16b^{2} \frac{\partial^{2} \nu_{1}}{\partial j^{2}}(0,0) \frac{\partial \nu_{1}}{\partial t}(0,0) - 8b \left( \frac{\partial \nu_{2}}{\partial j}(0,0) \right)^{2} \frac{\partial \nu_{1}}{\partial t}(0,0).
\end{equation*}
Thus, we find a criterion for when the concavity of the two curves are the same or opposite, as given in the following theorem:

\begin{theorem} \label{thm:concave}
The saddle-node bifurcations associated to $\nu_{1}(j,t)$ and $\disc_{b}(j,t)$ are concave the same direction if
\begin{equation*}
\frac{1}{2b} \frac{(\partial \nu_{2}/\partial j (0,0))^{2}}{\partial^{2} \nu_{1}/\partial j^{2} (0,0)} < 1.
\end{equation*}
If the inequality is reversed, they are concave in the opposite direction. If it is an equality, the double flip bifurcation is degenerate.
\end{theorem}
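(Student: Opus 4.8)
The plan is to read off the direction of concavity of each of the two saddle-node curves from the sign data already assembled above, and then to translate the requirement that both curves be concave in the same direction into the single inequality of the statement by an elementary sign chase.

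First I would recall the concavity criterion stated immediately above the theorem: near the origin the curve $\nu_1(j,t)=0$ is concave up when $\frac{\partial^2\nu_1}{\partial j^2}(0,0)\,\frac{\partial\nu_1}{\partial t}(0,0)<0$ and concave down when this product is positive, and the analogous statement holds for $\disc_b(j,t)=0$ with $\nu_1$ replaced by $\disc_b$ (using the derivatives $\frac{\partial\disc_b}{\partial j}(0,0)=0$, $\frac{\partial^2\disc_b}{\partial j^2}(0,0)$, $\frac{\partial\disc_b}{\partial t}(0,0)$ computed above). Hence the two curves are concave in the same direction exactly when the two real numbers $\frac{\partial^2\nu_1}{\partial j^2}(0,0)\,\frac{\partial\nu_1}{\partial t}(0,0)$ and $\frac{\partial^2\disc_b}{\partial j^2}(0,0)\,\frac{\partial\disc_b}{\partial t}(0,0)$ carry the same nonzero sign, i.e.\ when their product is strictly positive; they are concave in opposite directions when that product is strictly negative; and the borderline case, product zero, forces $\frac{\partial^2\disc_b}{\partial j^2}(0,0)=0$, which by \eqref{eq:non-deg} is precisely degeneracy of the double flip bifurcation.

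Next I would substitute into that product the formula
\[
\frac{\partial^2\disc_b}{\partial j^2}(0,0)\,\frac{\partial\disc_b}{\partial t}(0,0) = 16b^2\,\frac{\partial^2\nu_1}{\partial j^2}(0,0)\,\frac{\partial\nu_1}{\partial t}(0,0) - 8b\Bigl(\frac{\partial\nu_2}{\partial j}(0,0)\Bigr)^2\frac{\partial\nu_1}{\partial t}(0,0)
\]
derived above. Abbreviating $A:=\frac{\partial^2\nu_1}{\partial j^2}(0,0)\neq 0$, $B:=\frac{\partial\nu_1}{\partial t}(0,0)\neq 0$, and $C:=\frac{\partial\nu_2}{\partial j}(0,0)$, the product of the two sign-determining numbers factors as $AB\cdot\bigl(16b^2AB-8bC^2B\bigr)=8bAB^2\,(2bA-C^2)$. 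Since $B^2>0$, this is positive if and only if $bA\,(2bA-C^2)>0$; I would then divide by the positive quantity $2b^2A^2$, which does not change the sense of the inequality, obtaining $1-\frac{C^2}{2bA}>0$, i.e.\ $\frac{1}{2b}\frac{C^2}{A}<1$, which is the stated inequality. Reversing every inequality gives the opposite-concavity case, and equality $\frac{1}{2b}\frac{C^2}{A}=1$ is equivalent to $C^2=2bA$, i.e.\ to $\frac{\partial^2\disc_b}{\partial j^2}(0,0)=0$, i.e.\ to the double flip bifurcation being degenerate.

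I do not expect a genuine obstacle here: the real content has already been front-loaded into the derivative computations for $\disc_b$ carried out just before the theorem, and what is left is bookkeeping. The only points that need a moment's care are to observe that $B^2>0$ and $2b^2A^2>0$, so that the two divisions preserve the inequality, and to check that the equality case of the displayed inequality coincides exactly with the earlier definition of a degenerate double flip bifurcation; both are immediate.
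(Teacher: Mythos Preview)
Your proof is correct and follows essentially the same approach as the paper: both use the concavity criterion stated just before the theorem together with the already-computed expression for $\frac{\partial^{2}\disc_{b}}{\partial j^{2}}(0,0)\,\frac{\partial\disc_{b}}{\partial t}(0,0)$, and reduce the question to a sign comparison. The only cosmetic difference is that the paper splits into the two cases $AB<0$ and $AB>0$ and handles each separately, whereas you multiply the two sign-determining products and factor as $8bAB^{2}(2bA-C^{2})$; your packaging is arguably cleaner but the content is identical.
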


\begin{proof}
Assume that the saddle-node bifurcation associated to $\nu_{1}(j,t)$ is concave up, i.e.\ $\frac{\partial^{2} \nu_{1}}{\partial j^{2}}(0,0) \frac{\partial \nu_{1}}{\partial t}(0,0) < 0$. Then
\begin{equation*}
\frac{\partial^{2} \disc_{b}}{\partial j^{2}}(0,0) \frac{\partial \disc_{b}}{\partial t}(0,0) < 0 \quad \implies \quad
\frac{1}{2b} \frac{(\partial \nu_{2}/\partial j (0,0))^{2}}{\partial^{2} \nu_{1}/\partial j^{2} (0,0)} < 1.
\end{equation*}
If we, on the other hand, assume the saddle-node bifurcation associated to $\nu_{1}(j,t)$ is concave down, i.e.\ $\frac{\partial^{2} \nu_{1}}{\partial j^{2}}(0,0) \frac{\partial \nu_{1}}{\partial t}(0,0) > 0$, then we also get
\begin{equation*}
\frac{\partial^{2} \disc_{b}}{\partial j^{2}}(0,0) \frac{\partial \disc_{b}}{\partial t}(0,0) > 0 
\quad \implies \quad
\frac{1}{2b} \frac{(\partial \nu_{2}/\partial j (0,0))^{2}}{\partial^{2} \nu_{1}/\partial j^{2} (0,0)} < 1. \qedhere
\end{equation*}
\end{proof}
\section{Examples} \label{sec:examples}

In this section we consider three examples displaying double flip bifurcations. All three examples are $2$-degree of freedom systems, which we reduce to $1$-degree of freedom, so that we may put the Hamiltonian in the normal form from Theorem \ref{thm:normal-form}. We also employ Theorem \ref{thm:concave} to determine whether the two saddle-node bifurcations are concave in the same direction or not.

\subsection{Oscillator with $(1:-2)$-resonance} \label{sec:Nekhoroshev-oscillator}

First we consider a perturbation of the oscillator-system introduced by Nekhoroshev, Sadovski{\'i} and Zhilinski{\'i} \cite{Nekhoroshev2002}, see also \cite{Nekhoroshev2006}, Efstathiou, Cushman and Sadovski{\'i} \cite{Efstathiou2007}, Martynchuk and Efstathiou \cite{Martynchuk2017}, and Colin de Verdi{\`e}re and \vietnamese{{V{\~u} Ng{\d o}c}} \cite[Example 3.3.2]{colindeverdiere2003} (which is on the $1:2$ resonance). They considered an integrable Hamiltonian system with two degrees of freedom. The underlying symplectic manifold is $(\R^{4},\omega = dq_{1} \wedge dp_{1} + dq_{2} \wedge dp_{2})$, and the two integrals are
\begin{align}\label{eq:oscillator_t=1}
\begin{cases}
J = \frac{1}{2}(q_{1}^{2}+p_{1}^{2}) - (q_{2}^{2}+p_{2}^{2}), \\
H_{\varepsilon,1} = X + \varepsilon R^{2},
\end{cases}
\end{align}
where, $\Im(z)$ being the imaginary part of $z$,
\begin{align*}
&X = \Im((q_{1}+ip_{1})^{2}(q_{2}+ip_{2})) = 2q_{1}p_{1}q_{2} + (q_{1}^{2}-p_{1}^{2})p_{2}, \\
&R = \frac{1}{2}(q_{1}^{2}+p_{1}^{2}) + (q_{2}^{2}+p_{2}^{2}).
\end{align*}
Note that $J$ is the momentum for a $1:-2$ resonant circle action. In particular, in complex coordinates $z = q_{1} + ip_{1}$ and $w = q_{2} + ip_{2}$, the flow corresponding to the Hamiltonian vector field of $J$ is
\begin{equation*}
(z(t),w(t)) = (e^{-it}z(0),e^{2it}w(0)).
\end{equation*}
Thus, points with $z = 0$ and $w \neq 0$ have $\Z_{2}$ as isotropy group. Since the normal form from Theorem \ref{thm:normal-form} is $\Z_{2}$-symmetric, we expect that any point going through the bifurcation satisfies $q_{1} = p_{1} = 0$, and $q_{2} \neq 0$ and/or $p_{2} \neq 0$. Note that these points all correspond to negative values for $J$.

Let us also introduce, denoting by $\Re(z)$ the real part of $z$,
\begin{equation*}
Y = \Re((q_{1}+ip_{1})^{2}(q_{2}+ip_{2})) = (q_{1}^{2}-p_{1}^{2})q_{2} - 2q_{1}p_{1}p_{2}.
\end{equation*}
Note that $(J+R)^{2}(J-R) + 2(X^{2} + Y^{2}) = 0$. Furthermore, since $R+J = q_{1}^{2}+p_{1}^{2} \geq 0$ and $R-J = 2(q_{2}^{2}+p_{2}^{2}) \geq 0$, we have $R \geq \abs{J}$. These equations define the reduced phase space.

Let us perturb the system \eqref{eq:oscillator_t=1} by adding a parameter $t$ to the Hamiltonian, such that we have the following integrals:
\begin{align}\label{eq:oscillator_t}
\begin{cases}
J = \frac{1}{2}(q_{1}^{2}+p_{1}^{2}) - (q_{2}^{2}+p_{2}^{2}), \\
H_{\varepsilon,t} = (1-t)R + t(X + \varepsilon R^{2}).
\end{cases}
\end{align}
Note that when $t=1$ we obtain the system given by \eqref{eq:oscillator_t=1}. 

The singularities of the system are the points where the level sets of the Hamiltonian are tangent to the reduced phase space. Note that the level set $H_{\varepsilon,t}^{-1}(h)$ is given by solving $H_{\varepsilon,t} = h$ for $X$. Let $R = r$, then the solution is
\begin{equation*}
X = X_{\varepsilon,t}(r) := \frac{1}{t}(h-(1-t)r) - \varepsilon r^{2}.
\end{equation*}
Since it is independent of $Y$, and since $(J+R)^{2}(J-R) + 2(X^{2} + Y^{2}) = 0$ is a surface of revolution, all singularities satisfy $Y = 0$. Hence, the reduced phase space has been drawn in Figure \ref{fig:reduced_phase_space} projected onto the $(Y=0)$-plane, for certain values $J = j$ (see also Efstathiou, Cushman and Sadovski{\'i} \cite[Lemma 17]{Efstathiou2007}).

\begin{figure}[tb]
\def\scale{0.5}
\centering
\begin{tabular}{ccc} 
 {\includegraphics[scale=\scale]{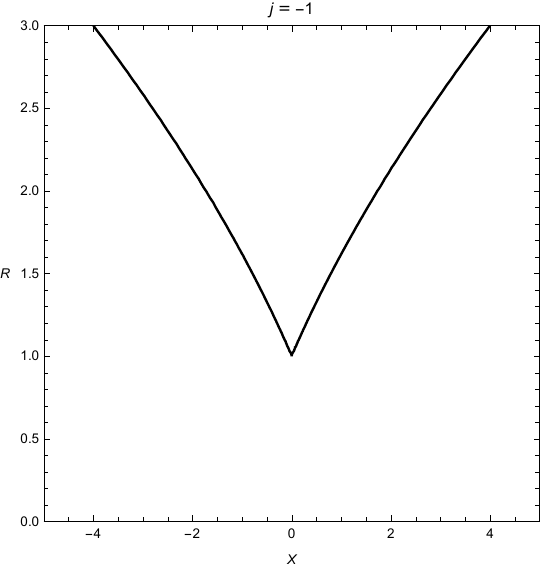}} & {\includegraphics[scale=\scale]{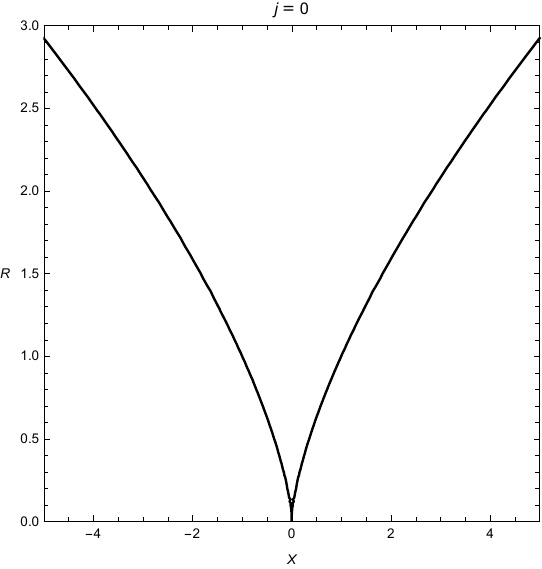}} & {\includegraphics[scale=\scale]{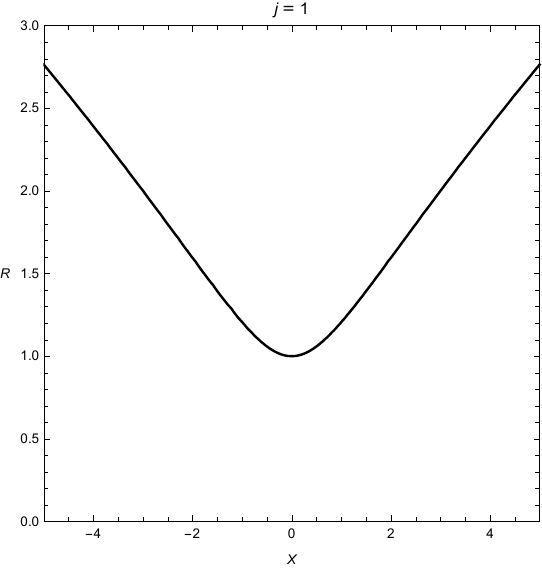}} 
\end{tabular}
\caption{Projections of the reduced phase space onto the $Y=0$ plane. To the left it has a conical singularity at the bottom, in the middle a cuspidal singularity, and to the right no singularity.}
\label{fig:reduced_phase_space}
\end{figure}



Let us now show that the system \eqref{eq:oscillator_t} goes through the double flip bifurcation. Recall that to determine that the system goes through this bifurcation, we put the Hamiltonian in a normal form, evaluated at the point of bifurcation. Thus, we need to find some candidate values for $j$ and $t$, where the system satisfies at least some of the requirements for the bifurcation, and which are preferably not difficult to verify. Hence, we call $(j,t)$ a \emph{candidate} if the level set $H_{\varepsilon,t}^{-1}(h)$ touches the conical singularity on the reduced phase space. Recall that we already can assume that $j$ is negative, as all points having $\Z_{2}$ as isotropy group satify this condition. Furthermore, the conical singularity takes the value $R = -j$. Furthermore, the slope of the level set $H_{\varepsilon,t}^{-1}(h)$, i.e.\ $\frac{\partial X_{\varepsilon,t}}{\partial r}$, is independent of $h$. 

For convenience, let us define
\begin{equation*}
j_{0}^{\pm}(\varepsilon,t) := \frac{4\varepsilon - t(4\varepsilon+1) \pm \sqrt{t} \sqrt{t(8\varepsilon+1) - 8\varepsilon}}{8t\varepsilon^{2}}.
\end{equation*}
Note that $j_{0}^{-}(\varepsilon,t) = j_{0}^{+}(\varepsilon,t) = j_{0}(\varepsilon)$ when $t = t_{0}(\varepsilon)$, where
\begin{equation*}
j_{0}(\varepsilon) := -\frac{1}{16\varepsilon^{2}}, 
\quad
t_{0}(\varepsilon) := \frac{8\varepsilon}{8\varepsilon+1}.
\end{equation*}

\begin{proposition} \label{prop:Nekhoroshev-flip}
The system \eqref{eq:oscillator_t} has a candidate for the double flip bifurcation at $(j,t) = (j_{0}(\varepsilon),t_{0}(\varepsilon))$.
\end{proposition}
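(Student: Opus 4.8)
The plan is to make the informal notion of ``candidate'' precise as an explicit algebraic condition on $(j,t)$, solve that condition for $j$, recognise the two solutions as the already-defined functions $j_{0}^{\pm}(\varepsilon,t)$, and then exhibit $(j_{0}(\varepsilon),t_{0}(\varepsilon))$ as the parameter value at which these two candidate values coalesce.

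First I would describe the local geometry of the reduced phase space near the conical singularity. For $j<0$ the singularity sits at $(R,X,Y)=(-j,0,0)$. Substituting $R=-j+s$ and $Y=0$ into $2(X^{2}+Y^{2})=(R+j)^{2}(R-j)$ gives $X^{2}=\tfrac12 s^{2}(s-2j)=-j\,s^{2}+O(s^{3})$, so in the $(R,X)$-plane the cone has two generators through the apex with slopes $\pm\sqrt{-j}$. Next, the level set is the graph $X=X_{\varepsilon,t}(r)=\tfrac1t\bigl(h-(1-t)r\bigr)-\varepsilon r^{2}$, whose slope $\partial X_{\varepsilon,t}/\partial r=-\tfrac{1-t}{t}-2\varepsilon r$ is independent of $h$ (as already noted), and at the apex $r=-j$ this slope equals $2\varepsilon j-\tfrac{1-t}{t}$. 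For a suitable $h$ the level set passes through the apex, so it touches the conical singularity precisely when its slope there coincides with one of the two cone generators, i.e.
\[
\Bigl(2\varepsilon j-\tfrac{1-t}{t}\Bigr)^{2}=-j .
\]

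Clearing denominators, this is the quadratic $4\varepsilon^{2}j^{2}+\bigl(1-\tfrac{4\varepsilon(1-t)}{t}\bigr)j+\tfrac{(1-t)^{2}}{t^{2}}=0$ in $j$; a short computation shows its discriminant simplifies to $\tfrac1t\bigl(t(8\varepsilon+1)-8\varepsilon\bigr)$, and the quadratic formula then returns exactly $j=j_{0}^{\pm}(\varepsilon,t)$. Hence $(j,t)$ is a candidate if and only if $j=j_{0}^{+}(\varepsilon,t)$ or $j=j_{0}^{-}(\varepsilon,t)$. The two candidate values merge exactly when the discriminant vanishes, that is when $t(8\varepsilon+1)-8\varepsilon=0$, i.e.\ $t=t_{0}(\varepsilon)=\tfrac{8\varepsilon}{8\varepsilon+1}$; substituting this back produces the double root $j=-\tfrac{1}{16\varepsilon^{2}}=j_{0}(\varepsilon)$. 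Thus $(j_{0}(\varepsilon),t_{0}(\varepsilon))$ satisfies the candidate condition, and it is the point at which the two candidate loci corresponding (by Lemma \ref{lem:H-flip}) to the two Hamiltonian flip bifurcations come together — that is, a candidate for the double flip bifurcation.

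The step I expect to be most delicate is the first one: correctly reading off the local cone structure (and in particular the generator slopes $\pm\sqrt{-j}$) from the implicit equation of the reduced phase space, together with pinning down what ``touches the conical singularity'' should mean — namely that the tangent direction of the level curve at the apex is a cone generator, rather than pointing transversally through or into the cone. Once that is settled, the remaining work is a routine, if slightly lengthy, algebraic simplification identifying the discriminant of the quadratic with the expression under the root in $j_{0}^{\pm}(\varepsilon,t)$, followed by the direct substitution $t=t_{0}(\varepsilon)$.
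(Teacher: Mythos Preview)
Your proof is correct and follows essentially the same approach as the paper: both compute the cone-generator slopes $\pm\sqrt{-j}$ at the apex $r=-j$, equate them with the slope $2\varepsilon j-\tfrac{1-t}{t}$ of the level curve, solve the resulting quadratic in $j$ to obtain $j_{0}^{\pm}(\varepsilon,t)$, and identify the double-flip candidate as the discriminant-zero point $(j_{0}(\varepsilon),t_{0}(\varepsilon))$. The only cosmetic differences are that you obtain the cone slopes by the substitution $R=-j+s$ and a leading-order expansion rather than by differentiating $X(j,r)$ directly, and that you square the tangency condition to handle both cone generators simultaneously.
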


\begin{proof}
Let us use the equation for the reduced phase space (projected to the $Y=0$ plane), and obtain $X(j,r) = \pm \sqrt{-(j+r)^{2}(j-r)/2}$. The slope at the singularity $r = -j$ is
\begin{equation*}
\left. \frac{\partial X}{\partial r} \right|_{r=-j} = \mp \sqrt{-j}.
\end{equation*}
Furthermore, the level set $H_{\varepsilon,t} = h$ yields
\begin{equation*}
\left. \frac{\partial X_{\varepsilon,t}}{\partial r} \right|_{r=-j} = 1 - \frac{1}{t} + 2\varepsilon j.
\end{equation*}
Solving $\frac{\partial X}{\partial r}|_{r=-j} = \frac{\partial X_{\varepsilon,t}}{\partial r}|_{r=-j}$ for $j$ yields $j = j_{0}^{\pm}(\varepsilon,t)$. Since $j$ has to be a real number, the Hamiltonian flip bifurcation takes place only when the discriminant is non-negative, i.e.\ when $t(1+8\varepsilon) - 8\varepsilon \geq 0$, or, equivalently, when $t \geq \frac{8\varepsilon}{8\varepsilon + 1}$. In particular, the candidate is given by the values where the discriminant vanishes, i.e.\ $(j,t) = (j_{0}(\varepsilon),t_{0}(\varepsilon))$.
\end{proof}

The normal form found in Theorem \ref{thm:normal-form} is that for a $1$-degree of freedom system, so we reduce the Hamiltonian $H_{\varepsilon,t}$ by the other integral $J$. Since $J$ defines an $S^{1}$-action, we may set $p_{2} = 0$, and write
\begin{equation*}
q_{2} = \pm \frac{1}{\sqrt{2}} \sqrt{q_{1}^{2} + p_{1}^{2} - 2j}.
\end{equation*}
Note also that 
\begin{equation*}
R + J = q_{1}^{2}+p_{1}^{2} \implies R = q_{1}^{2}+p_{1}^{2}-J.
\end{equation*}
Then, for some value $J = j$, the reduced Hamiltonian is
\begin{equation*}
H_{\varepsilon,j,t}^{\textup{red}} =
(1-t)(q_{1}^{2}+p_{1}^{2}-j) + t \left( \pm\sqrt{2}q_{1}p_{1}\sqrt{q_{1}^{2}+p_{1}^{2}-2j} + \varepsilon(q_{1}^{2}+p_{1}^{2}-j)^{2} \right),
\end{equation*}
where the $\pm$-sign comes from the choice of $q_{2}$. This does not matter here, as we anyway evaluate at the point $q_{1} = p_{1} = 0$, where it vanishes.

To simplify the notation, let us abbreviate $q = q_{1}$ and $p = p_{1}$. Let us compute the Taylor expansion of $H_{\varepsilon,j,t}^{\textup{red}}$ at $(q,p) = (0,0)$ up to sixth order, where $(H_{\varepsilon,j,t}^{\text{red}})_{i}$ is the $i$th order term:
\begin{align*}
&(H_{\varepsilon,j,t}^{\text{red}})_{0}(q,p) = \frac{3}{32\varepsilon(1+8\varepsilon)}, \\
&(H_{\varepsilon,j,t}^{\text{red}})_{2}(q,p) = \frac{2}{1+8\varepsilon}(p + q)^{2}, \\
&(H_{\varepsilon,j,t}^{\text{red}})_{4}(q,p) = \frac{8\varepsilon^{2}}{1+8\varepsilon}(p^{2} + q^{2})(p + q)^{2}, \\
&(H_{\varepsilon,j,t}^{\text{red}})_{6}(q,p) = -\frac{32\varepsilon^{4}}{1+8\varepsilon}pq(p^{2} + q^{2})^{2},
\end{align*}
and the odd-degree terms vanish. To put this in normal form, we first need to do something about the $2$nd degree term. To get rid of the $pq$-term, we consider the following coordinate transformation:
\begin{equation*}
\underbrace{\frac{1}{\sqrt{2}}\begin{pmatrix}
1 & 1 \\
-1 & 1
\end{pmatrix}}_{=:A}
\begin{pmatrix}
q \\
p
\end{pmatrix}
=
\frac{1}{\sqrt{2}}
\begin{pmatrix}
q + p \\
-q + p
\end{pmatrix}.
\end{equation*}
Note that this transformation preserves the symplectic structure:
\begin{equation*}
A^{T}\Omega_{0}A =
\frac{1}{2}
\begin{pmatrix}
1 & -1 \\
1 & 1
\end{pmatrix}
\begin{pmatrix}
0 & 1 \\
-1 & 0
\end{pmatrix}
\begin{pmatrix}
1 & 1 \\
-1 & 1
\end{pmatrix}
= 
\begin{pmatrix}
0 & 1 \\
-1 & 0
\end{pmatrix}.
\end{equation*}
Under this coordinate transformation, the reduced Hamiltonian takes the following form:
\begin{equation*}
\tilde{H}_{\varepsilon,j,t}^{\text{red}}(q,p) = (1-t)(q^{2}+p^{2}-j) + t \left( \pm \frac{-q^{2}+p^{2}}{\sqrt{2}} \sqrt{q^{2}+p^{2}-2j} + \varepsilon(q^{2}+p^{2}-j)^{2} \right).
\end{equation*}
Now the first six terms of the Taylor expansion at the origin are, in terms of coordinates $u = \frac{1}{2}q^{2}$, $v = \frac{1}{2}p^{2}$, and $w = qp$,
\begin{align*}
&(\tilde{H}_{\varepsilon,j,t}^{\text{red}})_{0} = j(jt\varepsilon+t-1), \\
&(\tilde{H}_{\varepsilon,j,t}^{\text{red}})_{2} = u \left( 2(1-t) - 2t(\sqrt{-j}+2j\varepsilon) \right) + v \left( 2(1-t) + 2t(\sqrt{-j}-2j\varepsilon) \right), \\
&(\tilde{H}_{\varepsilon,j,t}^{\text{red}})_{4} = u^{2} \left( 4\varepsilon - \frac{1}{\sqrt{-j}} \right) + 2w^{2}t\varepsilon + v^{2} \left( 4\varepsilon + \frac{1}{\sqrt{-j}} \right), \\
&(\tilde{H}_{\varepsilon,j,t}^{\text{red}})_{6} = \frac{t}{16\sqrt{-j^{3}}} \left(4u^{3} + uw^{2} - vw^{2} + 4v^{3} \right).
\end{align*}
Again, the odd-degree terms all vanish. Denote by
\begin{equation*}
\nu_{1}(\varepsilon,j,t) = 2(1-t) - 2t(\sqrt{-j}+2j\varepsilon)
\end{equation*}
the coefficient of $u$ in $(\tilde{H}_{\varepsilon,j,t}^{\text{red}})_{2}$. This vanishes at $(j,t) = (j_{0}(\varepsilon),t_{0}(\varepsilon))$. Furthermore,
\begin{align*}
&\frac{\partial \nu_{1}}{\partial j}(\varepsilon,j_{0}(\varepsilon),t_{0}(\varepsilon)) = 0, 
\quad
\frac{\partial^{2} \nu_{1}}{\partial j^{2}}(\varepsilon,j_{0}(\varepsilon),t_{0}(\varepsilon)) = \frac{256\varepsilon^{4}}{8\varepsilon+1}, \\
&\frac{\partial \nu_{1}}{\partial t}(\varepsilon,j_{0}(\varepsilon),t_{0}(\varepsilon)) = -\frac{8\varepsilon+1}{4\varepsilon}.
\end{align*}
Note also that, for all negative $j$, the coefficient of $v$ in $(\tilde{H}_{\varepsilon,j,t}^{\text{red}})_{2}$, denoted below by $a$, is positive for all $t \in [0,1]$. In particular, $a$ is positive for all $t \in [0,1]$ in a neighbourhood of $j_{0}(\varepsilon)$. Thus, the second order terms are of the desired form. Evaluated at $(j_{0}(\varepsilon),t_{0}(\varepsilon))$ we have
\begin{equation*}
a = \frac{8}{8\varepsilon+1}.
\end{equation*}

The higher order terms require more work. For this, let us do near identity transformations by introducing a function $F$, and consider:
\begin{equation*}
e^{\textup{ad}_F} = I + \ad_{F} + \frac{1}{2}\ad_{F}\ad_{F} + \cdots,
\end{equation*}
where $\ad_{F} = \{F,\cdot\}$, and $\{\cdot,\cdot\}$ is the Poisson bracket associated with the symplectic form. Now, let $F = F_{2} = f_{1}u^{2} + f_{2}v^{2} + f_{3}w^{2} + f_{4}uw + f_{5}vw$ be a degree-$2$ homogeneous polynomial with coordinates $u,v,w$. The coefficients $f_{i}$, $i \in \{1,...,5\}$, depend on the parameters $\varepsilon$, $j$ and $t$. Applying the transformation to the Taylor expansion of $\tilde{H}_{\varepsilon,j,t}^{\text{red}}$ we get
\begin{align*}
e^{\ad_{F_{2}}}\tilde{H}_{\varepsilon,j,t}^{\text{red}}
= &(\tilde{H}_{\varepsilon,j,t}^{\text{red}})_{0} + (\tilde{H}_{\varepsilon,j,t}^{\text{red}})_{2} + \left( (\tilde{H}_{\varepsilon,j,t}^{\text{red}})_{4} + \ad_{F_{2}}(\tilde{H}_{\varepsilon,j,t}^{\text{red}})_{2} \right) \\&+ \left( (\tilde{H}_{\varepsilon,j,t}^{\text{red}})_{6} + \ad_{F_{2}}(\tilde{H}_{\varepsilon,j,t}^{\text{red}})_{4} + \frac{1}{2}\ad_{F_{2}}\ad_{F_{2}}(\tilde{H}_{\varepsilon,j,t}^{\text{red}})_{2} \right) + \cdots.
\end{align*}
The brackets contain terms of the same degree. In the normal form, the only term of degree $2$ is $u^{2}$. Hence we find a $F_{2}$ such that $(\tilde{H}_{\varepsilon,j,t}^{\text{red}})_{4} + \ad_{F_{2}}(\tilde{H}_{\varepsilon,j,t}^{\text{red}})_{2}$ yields only $u^{2}$-terms. Then we find the second function of the normal form, the coefficient of $u^{2}$:
\begin{equation*}
\nu_{2}(\varepsilon,j,t) = \frac{4t \left( 8\varepsilon - t(3+16\varepsilon+32\varepsilon^{2}j) + t^{2}(3+2\varepsilon(4+j) + 32\varepsilon^{2}j + 32\varepsilon^{3}j^{2} \right)}{3(1 - t(1-\sqrt{-j}+2\varepsilon j))^{2}}.
\end{equation*}
Note that
\begin{equation*}
\nu_{2}(\varepsilon,j_{0}(\varepsilon),t_{0}(\varepsilon)) = 0, 
\quad \text{and} \quad
\frac{\partial \nu_{2}}{\partial j}(\varepsilon,j_{0}(\varepsilon),t_{0}(\varepsilon)) = - \frac{256\varepsilon^{4}}{8\varepsilon+1}.
\end{equation*}
Similarly we show that for the cubic terms, we can find some degree-$3$ homogeneous polynomial $F_{3}$ such that the only degree $3$ term surviving is the $u^{3}$-term. Note that this does not influence the lower order terms. The coefficient of $u^{3}$, evaluated at $(j,t) = (j_{0}(\varepsilon),t_{0}(\varepsilon))$, is
\begin{equation*}
b = \frac{96\varepsilon^{4}}{8\varepsilon+1}.
\end{equation*}

We want to employ Theorem \ref{thm:normal-form} to determine the type of the Hamiltonian flip bifurcations along $j_{0}^{\pm}(\varepsilon,t)$. Thus, we need to find the sign of $\nu_{2}(\varepsilon,j_{0}^{\pm}(\varepsilon,t),t)$, for $t > t_{0}(\varepsilon)$. Let
\begin{equation*}
r(\varepsilon,t) := \sqrt{t(4\varepsilon+1) - 4\varepsilon \pm \sqrt{t}\sqrt{t(8\varepsilon+1)-8\varepsilon}}.
\end{equation*}
One can show that
\begin{equation*}
\nu_{2}(\varepsilon,j_{0}^{\pm}(\varepsilon,t),t) = \frac{16\varepsilon t \left( t(8\varepsilon+1) - 8\varepsilon \pm \sqrt{t}\sqrt{t(8\varepsilon+1)-8\varepsilon} \right)}{\left( \sqrt{t(8\varepsilon+1)-8\varepsilon} \pm (\sqrt{t} + \sqrt{2}r(\varepsilon,t)) \right)^{2}}.
\end{equation*}
The denominator is squared, so it is positive as long as its root is real. Let us first verify this. The only part of the root needing to be checked is $r(\varepsilon,t)$. Let us substitute $t = t_{0}(\varepsilon) + \frac{\delta}{8\varepsilon+1}$, for some positive quantity $\delta > 0$, where the factor $\frac{1}{8\varepsilon+1}$ in front of $\delta$ has been introduced to simplify the calculation. Then the radicand of $r(\varepsilon,t)$ can be written as
\begin{equation*}
\frac{\delta(4\varepsilon+1) + 4\varepsilon \pm \sqrt{\delta(8\varepsilon+\delta)(8\varepsilon+1)}}{8\varepsilon+1}.
\end{equation*}
Since each term is positive, we only need to consider the case with the minus-sign. Furthermore, since we are only interested in the sign of the expression, we may simply compare the square of each term of the numerator:
\begin{equation*}
(\delta(4\varepsilon+1) + 4\varepsilon)^{2} - \delta(8\varepsilon+\delta)(8\varepsilon+1)
=
16\varepsilon^{2}(\delta-1)^{2} 
\geq 
0.
\end{equation*}
Hence, the denominator of $\nu_{2}(\varepsilon,j_{0}^{\pm}(\varepsilon,t),t)$ is positive for all $t > t_{0}(\varepsilon)$. The sign of the numerator, on the other hand, is positive for $\nu_{2}(\varepsilon,j_{0}^{+}(\varepsilon,t),t)$, and negative for $\nu_{2}(\varepsilon,j_{0}^{-}(\varepsilon,t),t)$. Hence, since $a > 0$, we have Hamiltonian flip bifurcations along $j_{0}^{-}(\varepsilon,t)$, and dual Hamiltonian flip bifurcations along $j_{0}^{+}(\varepsilon,t)$.

Finally, we compute
\begin{equation*}
\frac{1}{2b} \frac{\left( \partial \nu_{2} / \partial j (\varepsilon,j_{0}(\varepsilon),t_{0}(\varepsilon)) \right)}{\partial^{2} \nu_{1} / \partial j^{2} (\varepsilon,j_{0}(\varepsilon),t_{0}(\varepsilon))} = \frac{4}{3}.
\end{equation*}
Thus, by Theorem \ref{thm:concave} the bifurcation is non-degenerate with oppositely concave parabolas. 

Figure \ref{fig:osc-bif-diag} shows bifurcation diagrams for the system given by \eqref{eq:oscillator_t} for selected values of $t$. Here we see a cusp colliding with a curve of elliptic-regular values, producing two Hamiltonian-flip bifurcations. This is similar to Figure \ref{fig:bif-diag-normal-form-1}. See also Figure \ref{fig:osc-concavity}, where one can see the two saddle-node bifurcations, and that they are oppositely concave. Notice also that in the middle figure there are two overlapping curves, for $j \in [j_{0}(\varepsilon),0]$. In fact, this is also the case in Example \ref{sec:ex_Hirzebruch}, see Figure \ref{fig:W2-bif-diag}, but not in the following example, see Figure \ref{fig:osc2-bif-diag}.

\begin{figure}[tb]
\def\scale{0.55}
\centering
\begin{tabular}{ccc} 
 {\includegraphics[scale=\scale]{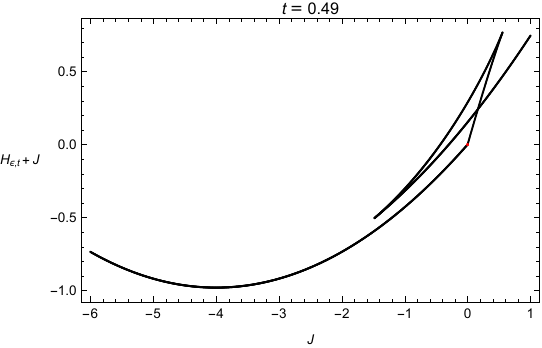}} & 
 {\includegraphics[scale=\scale]{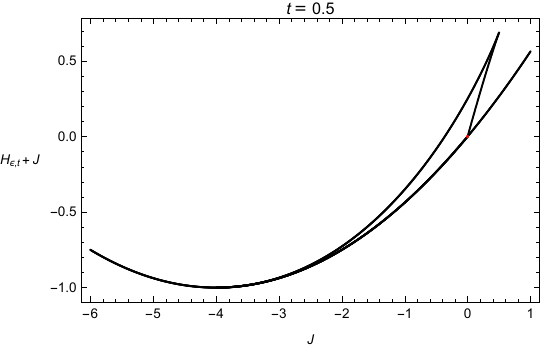}} & 
 {\includegraphics[scale=\scale]{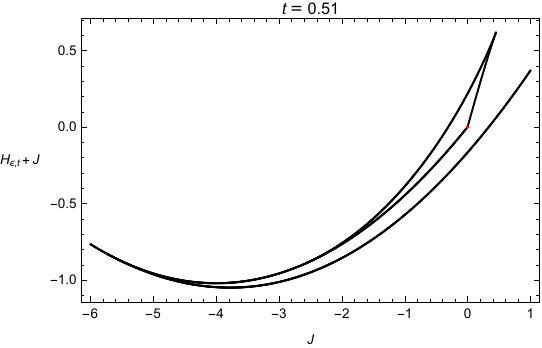}}
\end{tabular}
\caption{Bifurcation diagrams of the system given by \eqref{eq:oscillator_t} for selected values of $t$, with $\varepsilon = \frac{1}{8}$. Here we use $H_{\varepsilon,t}+J$ instead of $H_{\varepsilon,t}$ to more easily see the various branching points. The Hamiltonian flip bifurcations occur for $t \geq \frac{1}{2}$.}
\label{fig:osc-bif-diag}
\end{figure}

\begin{figure}[tb]
\centering
\includegraphics[width=0.8\linewidth]{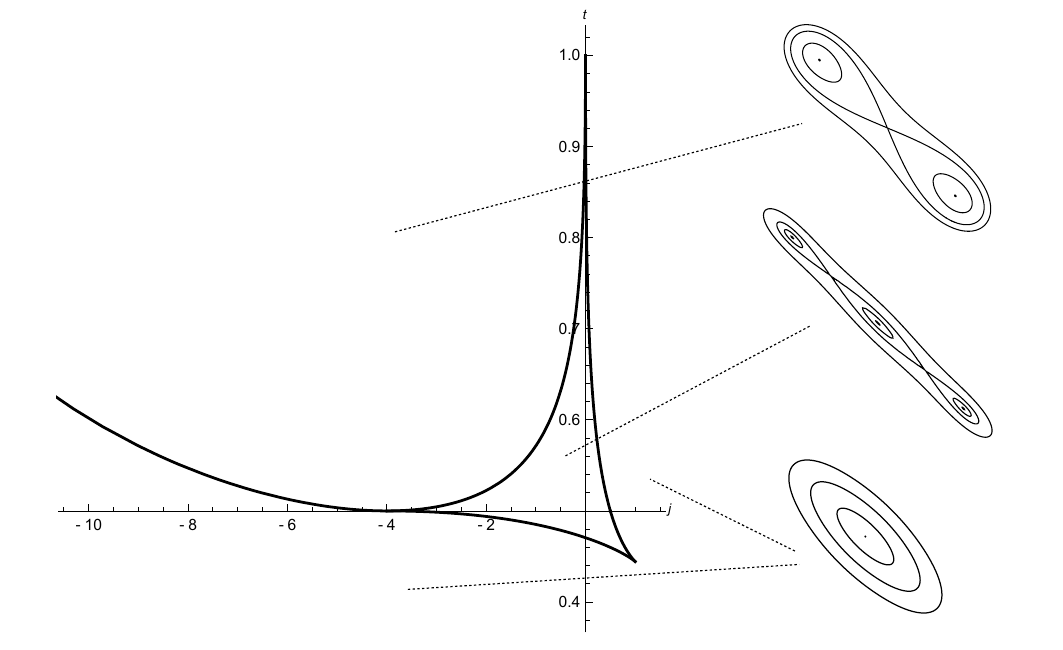}
\caption{The curves display the degenerate singularities, which bound the areas of different topological level sets.}
\label{fig:osc-concavity}
\end{figure}

\subsection{A slight modification of the previous oscillator}

We modify now the system \eqref{eq:oscillator_t} by increasing the exponent of $\varepsilon R^{2}$ by $1$, i.e.\ we consider
\begin{align} \label{eq:osc2}
\begin{cases}
J = \frac{1}{2}(q_{1}^{2}+p_{1}^{2}) - (q_{2}^{2}+p_{2}^{2}), \\
H_{\varepsilon,t} = (1-t)R + t(X + \varepsilon R^{3}).
\end{cases}
\end{align}
Otherwise everything else remains unchanged. However, as this turns out to be much more computationally heavy, let us fix a value for $\varepsilon$. We choose $\varepsilon = \frac{9}{256}$, which implies that $(j,t) = (-\frac{16}{9},\frac{1}{2})$ is a candidate for the double flip bifurcation. Then one computes the normal form in exactly the same way as in the previous example, showing that a double flip bifurcation occurs in the system. 

This example differs from the previous one in the sense that there are no overlapping lines in the bifurcation diagram, which is drawn in Figure \ref{fig:osc2-bif-diag}.

\begin{figure}[tb]
\def\scale{0.55}
\centering
\begin{tabular}{ccc} 
 {\includegraphics[scale=\scale]{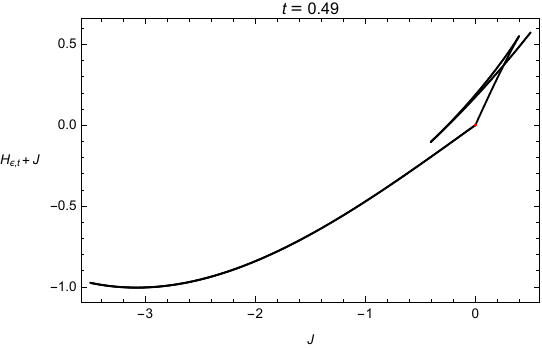}} & 
 {\includegraphics[scale=\scale]{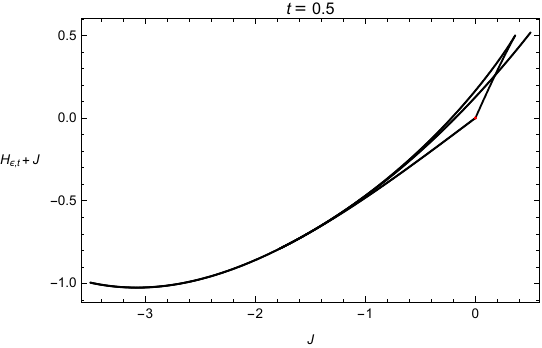}} & 
 {\includegraphics[scale=\scale]{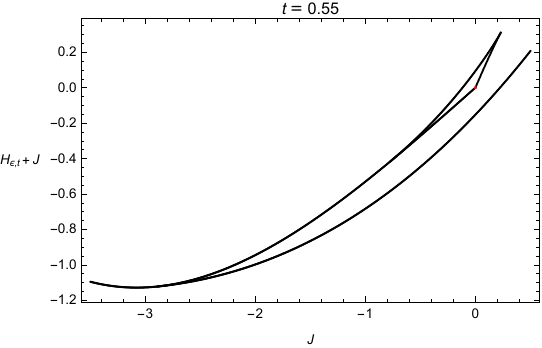}}
\end{tabular}
\caption{Bifurcation diagrams of the system given by \eqref{eq:osc2} for selected values of $t$, with $\varepsilon = \frac{9}{256}$. Here we use $H_{\varepsilon,t}+J$ instead of $H_{\varepsilon,t}$ to more easily see the various branching points. The Hamiltonian flip bifurcations occur for $t \geq \frac{1}{2}$.}
\label{fig:osc2-bif-diag}
\end{figure}

\subsection{Hirzebruch surface} \label{sec:ex_Hirzebruch}

The third example is given by the Hirzebruch surface defined as the symplectic reduction of $(\C^{4},-\frac{i}{2}\sum_{j=1}^{4} dz_{j} \wedge d\bar{z}_{j})$ by the Hamiltonian $T^{2}$-action generated by
\begin{equation*}
N(z_{1},z_{2},z_{3},z_{4}) = -\frac{1}{2} \left( \abs{z_{1}}^{2} + \abs{z_{2}}^{2} + 2\abs{z_{3}}^{2} + 3, \abs{z_{3}}^{2} + \abs{z_{4}}^{2} + 1 \right).
\end{equation*}
This specific Hirzebruch surface is denoted by $W_{2}(1,1)$, see Le Floch and Palmer \cite[Section 7]{LeFloch2018}, Flamand \cite[Example 3.3.7]{Flamand2024}, and Efstathiou, Hohloch and Santos \cite[Section 7.3]{Efstathiou2024}. The integrals of motion that we consider are given by
\begin{align} \label{eq:Hirz-sys}
\begin{cases}
J = \frac{1}{2}\abs{z_{2}}^{2}, \\
H_{t} = (1-t)R + tX,
\end{cases}
\end{align}
where 
\begin{align*}
R = \frac{1}{2}\abs{z_{3}}^{2}, \quad X = \Re(\bar{z}_{1}^{2}z_{3}\bar{z}_{4}).
\end{align*}
As above, we also consider
\begin{equation*}
Y = \Im(\bar{z}_{1}^{2}z_{3}\bar{z}_{4}),
\end{equation*}
which satisfy the syzygy
\begin{equation} \label{eq:Hirz-syzygy}
X^{2} + Y^{2} = 16R(1-R)(3-J-2R)^{2}
\end{equation}
(see Flamand \cite[Lemma 2.5.2]{Flamand2024}). 

The level set of the Hamiltonian $H_{t}^{-1}(h)$ is given by
\begin{equation*}
X_{t}(r) = \frac{1}{t}(h-(1-t)r),
\end{equation*}
which is independent of $Y$. Since the syzygy \eqref{eq:Hirz-syzygy} is a surface of revolution, we again have singularities only for $Y = 0$. Hence, we again draw the reduced phase space projected onto the plane given by $Y = 0$, in Figure \ref{fig:Hirz-ps}. We can see a conical singularity for $j > 1$.

\begin{figure}[tb]
\def\scale{0.5}
\centering
\begin{tabular}{ccc} 
 {\includegraphics[scale=\scale]{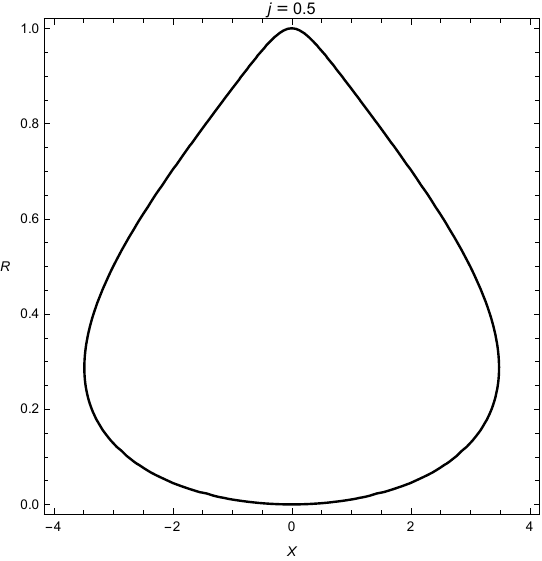}} & {\includegraphics[scale=\scale]{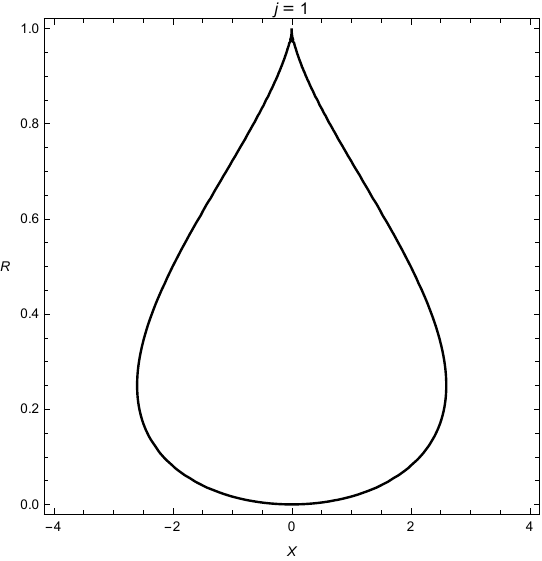}} & {\includegraphics[scale=\scale]{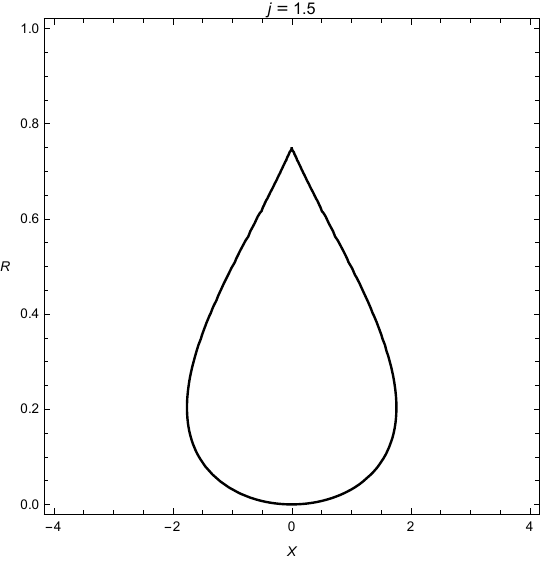}} 
\end{tabular}
\caption{Projections of the reduced phase space onto the $Y=0$ plane.}
\label{fig:Hirz-ps}
\end{figure}

Let us now recall the local coordinates introduced by Le Floch and Palmer \cite[Section 5.2]{LeFloch2018}. Let $z_{j} = x_{j} + iy_{j}$, where $x_{j},y_{j} \in \R$, for $j \in \{1,2,3,4\}$, and let $[z_{1},z_{2},z_{3},z_{4}] \in W_{2}(1,1)$ be the equivalence class of $(z_{1},z_{2},z_{3},z_{4}) \in \C^{4}$ under the action by $N$. Then one can cover $W_{2}(1,1)$ by four charts, defined by
\begin{equation*}
U_{l,m} = \{ [z_{1},z_{2},z_{3},z_{4}] : z_{l} \neq 0 \neq z_{m} \}.
\end{equation*}
Then $W_{2}(1,1) \subset U_{1,3} \cup U_{1,4} \cup U_{2,3} \cup U_{2,4}$. Furthermore, one can obtain local coordinates on $U_{l,m}$ such that $z_{l},z_{m} \in \R$, i.e.\ $y_{l} = y_{m} = 0$, and, for
\begin{itemize}
    \item $(l,m) = (1,3)$: $x_{1} = \sqrt{2+2(x_{4}^{2}+y_{4}^{2})-(x_{2}^{2}+y_{2})^{2}}$, $x_{3} = \sqrt{2-(x_{4}^{2}+y_{4}^{2})}$,
    \item $(l,m) = (1,4)$: $x_{1} = \sqrt{6-2(x_{3}^{2}+y_{3}^{2})-(x_{2}^{2}+y_{2}^{2})}$, $x_{4} = \sqrt{2-(x_{3}^{2}+y_{3}^{2})}$,
    \item $(l,m) = (2,3)$: $x_{2} = \sqrt{2+2(x_{4}^{2}+y_{4}^{2})-(x_{1}^{2}+y_{1}^{2})}$, $x_{3} = \sqrt{2-(x_{4}^{2}+y_{4}^{2})}$,
    \item $(l,m) = (2,4)$: $x_{2} = \sqrt{6-2(x_{3}^{2}+y_{3}^{2})-(x_{1}^{2}+y_{1}^{2})}$, $x_{4} = \sqrt{2-(x_{3}^{2}+y_{3}^{2})}$.
\end{itemize}

We consider the integral $J$ with the local coordinates on the chart $U_{2,3}$:
\begin{equation*}
J(x_{1},y_{1},x_{4},y_{4}) = -\frac{1}{2}(x_{1}^{2}+y_{1}^{2}) + (x_{4}^{2}+y_{4}^{2}).
\end{equation*}
Hence, $J$ is in the normal form for a $1:-2$ resonant system. Note that the points with $x_{1} = y_{1} = 0$ have $\Z_{2}$ as isotropy group. 

We may use the local coordinates on $U_{2,3}$ to find a candidate for the double flip bifurcation. A candidate can also be the value $t$ where one first encounters degenerate singularities on the points have $\Z_{2}$ as isotropy group.

\begin{proposition}
The system \eqref{eq:Hirz-sys} has a candidate for the double flip bifurcation at $(j,t) = (j_{0},t_{0}) := (2,\frac{1}{5})$.
\end{proposition}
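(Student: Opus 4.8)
The plan is to reproduce, step by step, the argument used for Proposition \ref{prop:Nekhoroshev-flip}, now with the reduced phase space of $W_{2}(1,1)$ in place of that of the $1:-2$ oscillator. First I would use the syzygy \eqref{eq:Hirz-syzygy} restricted to $\{Y = 0\}$ to present the reduced phase space as the graph $X = X(j,r) = \pm 4\sqrt{r(1-r)}\,\abs{3-j-2r}$ over $R = r$. Nonnegativity of the right-hand side of \eqref{eq:Hirz-syzygy} forces $0 \leq r \leq 1$; reading off the coordinate formulas on the chart $U_{2,3}$ gives $\abs{z_{2}}^{2} = 6 - 2\abs{z_{3}}^{2} - \abs{z_{1}}^{2}$, hence $J = 3 - 2R - \tfrac12\abs{z_{1}}^{2}$, so the $\Z_{2}$-isotropy locus $x_{1} = y_{1} = 0$ sits exactly at $r = r_{*} := (3-j)/2$, which lies in the reduced phase space precisely for $1 \leq j \leq 3$ and is the conical singularity visible in Figure \ref{fig:Hirz-ps}. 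Near $r_{*}$ one has $\abs{3-j-2r} = 2\abs{r - r_{*}}$, so the two branches of $X(j,\cdot)$ have one-sided slopes $\pm 8\sqrt{r_{*}(1-r_{*})} = \pm 4\sqrt{(3-j)(j-1)}$ at the cone point.

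Next I would compute the slope of the level set. Solving $H_{t} = h$ for $X$ gives $X_{t}(r) = \tfrac1t\bigl(h - (1-t)r\bigr)$, with slope $\partial X_{t}/\partial r = 1 - \tfrac1t$ independent of $h$. As in Proposition \ref{prop:Nekhoroshev-flip}, $(j,t)$ is a candidate when the level line through the cone point coincides to first order with one edge of the cone, i.e.\ when $1 - \tfrac1t = \pm 4\sqrt{(3-j)(j-1)}$; squaring, this is the quadratic $16j^{2} - 64j + 48 + \bigl(1 - \tfrac1t\bigr)^{2} = 0$ in $j$, with roots $j_{0}^{\pm}(t) = 2 \pm \sqrt{\,1 - \tfrac{1}{16}\bigl(1 - \tfrac1t\bigr)^{2}\,}$. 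These are real precisely when $\bigl(1 - \tfrac1t\bigr)^{2} \leq 16$, and the double-flip candidate is the value of $t$ at which the two roots merge, namely $1 - \tfrac1t = \pm 4$. Of the solutions $t = -\tfrac13$ and $t = \tfrac15$, only $t = \tfrac15$ lies in the physically relevant interval $[0,1]$; at that value the merged root is $j = j_{0}^{\pm}(\tfrac15) = 2$, giving the asserted candidate $(j_{0},t_{0}) = (2,\tfrac15)$.

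All of the above is elementary algebra, so I do not expect a genuine analytic difficulty; the places that need care are (i) pinning down the conical singularity of the reduced phase space correctly --- in particular checking through the $U_{2,3}$-coordinates that it coincides with the $\Z_{2}$-isotropy locus and that it lies in the admissible range $1 \leq j \leq 3$ --- and (ii) discarding the spurious root $t = -\tfrac13$ by invoking $t \in [0,1]$. A minor bookkeeping point is the choice of sign in $1 - \tfrac1t = \pm 4$: since $t < 1$ the level line has negative slope, so it must match the negatively-sloped cone edge, which is what selects $1 - \tfrac1t = -4$; squaring the slope-matching relation from the outset avoids having to track this.
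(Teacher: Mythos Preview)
Your argument is correct and reaches the same candidate $(j_{0},t_{0}) = (2,\tfrac15)$, but the route differs from the paper's. You adapt the geometric slope-matching argument of Proposition~\ref{prop:Nekhoroshev-flip}: locate the conical singularity on the reduced phase space, compute the one-sided slopes of the cone there, and match them to the constant slope $1 - \tfrac1t$ of the level lines $X_{t}(r)$. The paper instead reduces the Hamiltonian in the $U_{2,3}$-chart, linearises the Hamiltonian vector field at $(x_{1},y_{1}) = (0,0)$, and computes the eigenvalues $\lambda^{\pm} = \pm\tfrac12\sqrt{-16j^{2}t^{2} + 64jt^{2} - 49t^{2} + 2t - 1}$; the candidate is then the point where the two branches of $\lambda^{\pm} = 0$ coalesce. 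The two approaches are equivalent---multiplying your quadratic $16j^{2} - 64j + 48 + (1-\tfrac1t)^{2} = 0$ through by $t^{2}$ gives exactly the paper's radicand---so neither is deeper than the other. Your argument has the virtue of being a direct transcription of the method already used in Section~\ref{sec:Nekhoroshev-oscillator}, which makes the parallel between the examples explicit; the paper's eigenvalue computation is more self-contained and makes the degeneracy of the singularity (vanishing of $\lambda^{\pm}$) manifest without reference to the cone geometry.
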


\begin{proof}
We reduce the Hamiltonian, and compute the eigenvalues of the linearised Hamiltonian vector field. Then we find for what values of $j$ and $t$ the eigenvalues vanish.

With the local coordinates on $U_{2,3}$ we solve $J = j$ to find
\begin{equation*}
x_{4} = \sqrt{j-1+\frac{1}{2}(x_{1}^{2}+y_{1}^{2})},
\end{equation*}
where we have used that $J$ defines an $S^{1}$-action to eliminate $y_{4}$. Then we find the reduced Hamiltonian
\begin{align*}
H_{j,t}^{\textup{red}}(x_{1},y_{1}) = \frac{1}{4} \bigg( &
6 + 2j(t-1) - 6t + (x_{1}^{2}+y_{1}^{2})(t-1) + \\&(x_{1}^{2}-y_{1}^{2})t\sqrt{(6-2j-x_{1}^{2}-y_{1}^{2})(2j-2+x_{1}^{2}+y_{1}^{2})}
\bigg).
\end{align*}
Computing the Hamiltonian vector field of $H_{j,t}$, and evaluating at $(x_{1},y_{1}) = (0,0)$, yields the following eigenvalues:
\begin{equation*}
\lambda^{\pm} = \pm \frac{1}{2} \sqrt{-16j^{2}t^{2} + 64jt^{2} - 49t^{2} + 2t - 1}.
\end{equation*}
The eigenvalue $\lambda^{\pm}$ vanishes if $(j,t) = (2,\frac{1}{5})$, or if
\begin{equation*}
t > \frac{1}{5}, 
\quad
j = \frac{8t \pm \sqrt{(5t-1)(3t+1)}}{4t}.
\end{equation*}
Thus, the natural candidate is $(j,t) = (2,\frac{1}{5})$.
\end{proof}

A similar procedure to the one given for the oscillator may be used to normalise the Hamiltonian for the Hirzebruch surface. The normal form is
\begin{align*}
\tilde{H}_{j,t}^{\textup{red}}(x_{1},y_{1})
= 
&\frac{(t-1)(j-3)}{2}
- \frac{1 - t + 4t\sqrt{-3+4j-j^{2}}}{4}y_{1}^{2} \\&
- \frac{1 - t - 4t\sqrt{-3+4j-j^{2}}}{4}x_{1}^{2} \\&
+ \frac{8t^{2}(t-1)(j-2)}{(1 - t(1-4\sqrt{-3+4j-j^{2}}))^{2}}x_{1}^{4} \\&
+ \frac{4t^{2}(t-1)}{((j-3)(j-1)(1 - t(1-4\sqrt{-3+4j-j^{2}}))^{3})}x_{1}^{6}.
\end{align*}
We readily read off from this expression
\begin{align*}
&\nu_{1}(j,t) = - \frac{1 - t - 4t\sqrt{-3+4j-j^{2}}}{2}, \\
&\nu_{2}(j,t) = \frac{32(t-1)(j-2)}{(1 - t(1-4\sqrt{-3+4j-j^{2}}))^{2}}.
\end{align*}
Note that 
\begin{align*}
&\nu_{1}(j_{0},t_{0}) = 0,
\quad
\frac{\partial \nu_{1}}{\partial j}(j_{0},t_{0}) = 0,
\quad
\frac{\partial^{2} \nu_{1}}{\partial j^{2}}(j_{0},t_{0}) = -\frac{2}{5},
\quad
\frac{\partial \nu_{1}}{\partial t}(j_{0},t_{0}) = \frac{2}{5}, \\
&\nu_{2}(j_{0},t_{0}) = 0,
\quad
\frac{\partial \nu_{2}}{\partial j}(j_{0},t_{0}) = -\frac{2}{5}.
\end{align*}
Furthermore, at the time of bifurcation, the coefficient of $\frac{1}{2}p^{2}$, denoted by $a$, and the coefficient of $\frac{1}{6}q^{6}$, denoted by $b$, are
\begin{equation*}
a = -\frac{4}{5}, \quad 
b = -\frac{3}{20}.
\end{equation*}

We find the solutions $\nu_{1}(j,t) = 0$, given by
\begin{equation*}
j_{0}^{\pm}(t) = \frac{8t \pm \sqrt{(5t-1)(3t+1)}}{4t}.
\end{equation*}
Plugging this into the expression for $\nu_{2}(j,t)$ yields
\begin{equation*}
\nu_{2}(j_{0}^{\pm}(t),t) = \pm \frac{4t\sqrt{(5t-1)(3t+1)}}{1-t+\sqrt{(t-1)^{2}}}.
\end{equation*}
Since $\nu_{1}(j,t)$ vanishes for $t \geq \frac{1}{5}$, we need to restrict our evaluation to this region. Note that then the square root in the numerator is $\geq 0$, where we have equality for $t = \frac{1}{5}$. Furthermore, for $\frac{1}{5} \leq t \leq 1$ the denominator is $1-t+\abs{t-1} = 2(1-t)$. Thus $\nu_{2}(j_{0}^{+}(t),t) > 0$ and $\nu_{2}(j_{0}^{-}(t),t) < 0$ for $\frac{1}{5} < t < 1$. Thus, since $a < 0$, there is a
\begin{itemize}
    \item Hamiltonian flip bifurcation along $j_{0}^{-}(t)$, and
    \item dual Hamiltonian flip bifurcation along $j_{0}^{+}(t)$.
\end{itemize}

Finally, we find
\begin{equation*}
\frac{1}{2b} \frac{\left( \partial \nu_{2} / \partial j (\varepsilon,j_{0}(\varepsilon),t_{0}(\varepsilon)) \right)}{\partial^{2} \nu_{1} / \partial j^{2} (\varepsilon,j_{0}(\varepsilon),t_{0}(\varepsilon))} = \frac{4}{3}.
\end{equation*}
Thus, the bifurcation is non-degenerate with oppositely concave parabolas. The bifurcation diagram for certain values of $t$ has been drawn in Figure \ref{fig:W2-bif-diag}.

\begin{figure}[tb]
\def\scale{0.55}
\centering
\begin{tabular}{ccc} 
 {\includegraphics[scale=\scale]{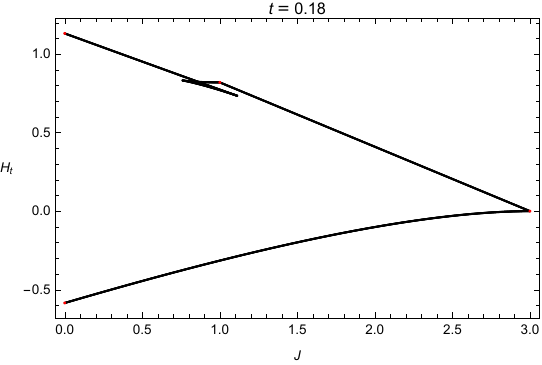}} & 
 {\includegraphics[scale=\scale]{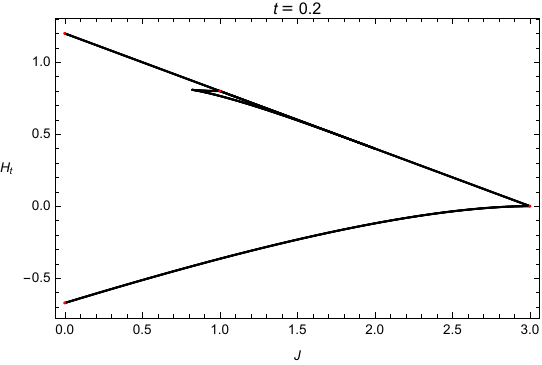}} & 
 {\includegraphics[scale=\scale]{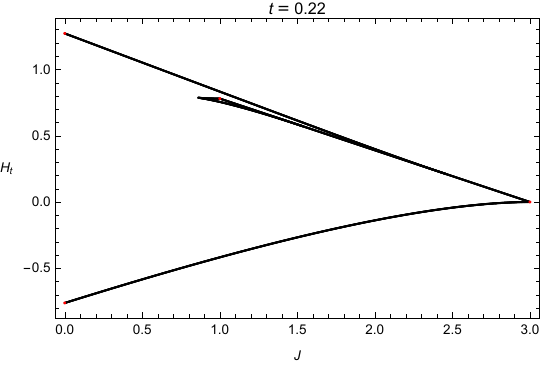}}
\end{tabular}
\caption{Bifurcation diagrams of the $W_{2}(1,1)$ Hirzebruch surface.}
\label{fig:W2-bif-diag}
\end{figure}
\printbibliography

\end{document}